\numberwithin{equation}{section} 
\numberwithin{figure}{section} 
  \theoremstyle{plain}
  \newtheorem*{thm*}{Theorem}
  \theoremstyle{plain}
  \newtheorem{thm}{Theorem}[section]
  \theoremstyle{definition}
  \newtheorem{defn}[thm]{Definition}
  \theoremstyle{plain}
  \newtheorem{lem}[thm]{Lemma}
  \theoremstyle{plain}
  \newtheorem{prop}[thm]{Proposition}
  \theoremstyle{remark}
  \theoremstyle{remark}
  \newtheorem*{acknowledgement*}{Acknowledgement}
\begin{document}

\title[Helmholtz conditions and symmetries]
{Helmholtz conditions and symmetries for the time dependent case
of the inverse problem of the calculus of variations}

\author{Ioan Bucataru and Oana Constantinescu}

\date{\today}

\address{``Al.I.Cuza'' University of Iasi, Faculty of Mathematics, 700506 Iasi, Romania}
\email{bucataru@uaic.ro, oanacon@uaic.ro}
\urladdr{http://www.math.uaic.ro/\textasciitilde{}bucataru/}
\urladdr{http://www.math.uaic.ro/\textasciitilde{}oanacon/}

\begin{abstract}
We present a reformulation of the inverse problem of the calculus
of variations for time dependent systems of second order ordinary
differential equations using the Fr\"olicher-Nijenhuis theory on
the first jet bundle, $J^1\pi$. We prove that a system of time
dependent SODE, identified with a semispray $S$, is Lagrangian if
and only if a special class, $\Lambda^1_S(J^1\pi)$, of semi-basic
$1$-forms is not empty. We provide global Helmholtz conditions to
characterize the class $\Lambda^1_S(J^1\pi)$ of semi-basic
$1$-forms. Each such class contains the Poincar\'e-Cartan 1-form
of some Lagrangian function. We prove that if there exists a
semi-basic $1$-form in $\Lambda^1_S(J^1\pi)$, which is not a
Poincar\'e-Cartan 1-form, then it determines a dual symmetry and a
first integral of the given system of SODE.
\end{abstract}

\subjclass[2000]{58E30, 34A26, 70H03, 49N45}

\keywords{semi-basic forms, Poincar\'e-Cartan 1-form, Poincar\'e
Lemma, Helmholtz conditions, inverse problem, time dependent SODE,
dual symmetry, first integral}

\maketitle

\section{Introduction}

In this work we present a reformulation of the inverse problem for
time dependent systems of second order ordinary differential
equations in terms of semi-basic 1-forms. In this approach we
solely make use of the Fr\"olicher-Nijenhuis theory on
$\pi_{10}:\, J^{1}\pi\rightarrow M$, the first jet bundle of an
$(n+1)$-dimensional, real, smooth manifold $M$, which is fibred
over $\mathbb{R}$, $\pi:M\rightarrow\mathbb{R}$. We characterize
when the time dependent system of SODE
\begin{eqnarray}
\frac{d^{2}x^{i}}{dt^{2}}+2G^{i}\left(t,x,\frac{dx}{dt}\right)=0
\label{sode}
\end{eqnarray}
is equivalent to the system of Euler-Lagrange equations
\begin{eqnarray}
\frac{d}{dt}\left(\frac{\partial L}{\partial
y^{i}}\right)-\frac{\partial L}{\partial x^{i}}=0,\quad
y^{i}=\frac{dx^{i}}{dt}, \label{el_equations}
\end{eqnarray}
for some smooth Lagrangian function $L$ on $J^{1}\pi$, in terms of
a class $\Lambda^1_S(J^1\pi)$ of semi-basic $1$-forms on $J^1\pi$.
This work is a natural extension of the time independent case
studied in \cite{bucataru09}. However, the time dependent
framework has particular aspects that result into some differences
of this approach from the one studied in \cite{bucataru09}.
Lagrangian systems of time independent differential equations are
always conservative and this is not the case in the time dependent
framework. In this approach we use the formalism developed for the
inverse problem of the calculus of variations to search for
symmetries as well. In other words, this approach gives the
possibility of searching for dual symmetries for the time
dependent system \eqref{sode} of SODE in the considered class
$\Lambda^1_S(J^1\pi)$ of semi-basic $1$-forms.

Necessary and sufficient conditions under which the two systems
\eqref{sode} and \eqref{el_equations} can be identified using a
multiplier matrix are usually known as the \emph{Helmholtz
conditions}. This inverse problem was entirely solved only for the
case $n=1$ by Darboux, in 1894, and for $n=2$ by Douglas, in 1941
\cite{douglas41}. A geometric reformulation of Douglas approach,
using linear connections arising from a system of SODE and its
associated geometric structures, can be found in \cite{crampin94,
sarlet02}. The relation between the inverse problem of the
calculus of variations and the condition of self-adjointness for
the equations of variation of the system \eqref{sode} was studied
by Davis in 1929, \cite{davis29}. In 1935, Kosambi
\cite{kosambi35}, has obtained necessary and sufficient
conditions, that were called latter Helmholtz conditions, for the
equations of variations of the system \eqref{sode} to be
self-adjoint. For various approaches to derive the Helmholtz
conditions in both autonomous and nonautonomous case, we refer to
Santilli \cite{santilli78}, Crampin \cite{crampin81}, Henneaux
\cite{henneaux82}, Sarlet \cite{sarlet82}, Marmo et al.
\cite{marmo89}, Morandi et al. \cite{morandi90}, Anderson and
Thompson \cite{anderson92}, Krupkov\'a and Prince
\cite{krupkova08}. See also \cite{bucataru09} for a reformulation
of the Helmholtz conditions in terms of semi-basic $1$-forms in
the time independent case.

In this paper we study the inverse problem of the calculus of
variations when the time dependent system \eqref{sode} of SODE is
identified with a semispray $S$ on the first jet bundle $J^1\pi$.
We seek for a solution of the inverse problem of the calculus of
variations in terms of semi-basic $1$-forms on $J^1\pi$. We first
show, in Theorem \ref{LVF}, that a semispray $S$ is a Lagrangian
vector field if and only if there exists a class of semi-basic
1-forms $\theta$ on $J^{1}\pi$ such that their Lie derivatives
$\mathcal{L}_{S}\theta$ are closed $1$-forms. We denote this class
by $\Lambda^1_S(J^1\pi)$. These results reformulate, in terms of a
semi-basic $1$-form, the results expressed in terms of a $2$-form,
obtained in \cite{anderson92, balachandran80, crampin81,
crampin84a, marmo89}. In Proposition \ref{prop:LVF} we strengthen
the results of Theorem \ref{LVF} and prove that, if nonempty, the
set of semi-basic $1$-forms $\Lambda^1_S(J^1\pi)$ always contains
the Poincar\'e-Cartan 1-form of some Lagrangian function $L$.
Moreover, we show that any semi-basic $1$-form $\theta \in
\Lambda^1_S(J^1\pi)$, which is not the Poincar\'e-Cartan 1-form of
some Lagrangian function, determines a \emph{first integral} and a
\emph{dual symmetry} of the Lagrangian system.

In Theorems \ref{thm:LST1} and \ref{thm:LST2} we characterize the
semi-basic $1$-forms of the set $\Lambda^1_S(J^1\pi)$, depending
if they represent or not Poincar\'e-Cartan 1-forms. First, we pay
attention to $d_J$-closed, semi-basic $1$-forms $\theta$, where
$J$ is the vertical endomorphism. This class of semi-basic
$1$-forms $\theta$ coincides with the class of Poincar\'e-Cartan
$1$-forms corresponding to the Lagrangian function
$L=i_{S}\theta$, see Lemma \ref{LemPoincare2}. For this class we
prove, in Theorem \ref{thm:LST1}, that the inverse problem has a
solution if and only if the Poincar\'e-Cartan $1$-form is
$d_h$-closed, where $h$ is the horizontal projector induced by the
semispray.

In Theorem \ref{thm:LST2} we formulate a coordinate free version
of the Helmholtz conditions in terms of semi-basic $1$-forms on
the first jet bundle $J^1\pi$. If there exists a semi-basic
$1$-form $\theta$ that satisfies the Helmholtz conditions, then
$\theta$ is equivalent (modulo $dt$) with the Poincar\'e-Cartan
$1$-form of some Lagrangian function. Moreover, if such $\theta$
is not $d_J$-closed then $i_Sd\theta$ is a dual-symmetry and
induces a first integral for the semispray $S$. This theorem gives
a characterization for \emph{conservative}, Lagrangian, time
dependent vector fields. To derive the Helmholtz conditions in
Theorem \ref{thm:LST2} we make use of the Fr\"olicher-Nijenhuis
theory on $J^1\pi$ developed in Section \ref{FN-theory}, as well
as of the geometric objects induced by a semispray that are
presented in Section \ref{gobjects}.

Since semi-basic $1$-forms play a key role in our work, we pay a
special attention to this topic in Section \ref{semibasic}. In
this section we prove a Poincar\'e-type Lemma for the differential
operator $d_J$ restricted to the class of semi-basic $1$-forms on
$J^1\pi$. The Poincar\'e-type Lemma will be very useful to
characterize those semi-basic $1$ forms that are the
Poicar\'e-Cartan $1$-forms of some Lagrangian functions.

An important tool in this work is discussed in Section \ref{dcv},
it is a tensor derivation on $J^1\pi$ that is called the dynamical
covariant derivative induced by a semispray. This derivation has
its origins in the work of Kosambi \cite{kosambi35}, where it has
been introduced with the name of "bi-derivative". The notion of
dynamical covariant derivative was introduced by Cari\~nena and
Martinez in \cite{carinena91} as a covariant derivative along the
bundle projection $\pi_{10}$. See also \cite{cantrijn96,
crampin96, jerie02, mestdag01, sarlet95}. It can be defined also
as a derivation on the total space $TJ^1\pi$, see \cite{massa94,
morando95}.

\section{Preliminaries} \label{preliminaries}

\subsection{The first jet bundle $J^{1}\pi$}

For a geometric study of time dependent systems of SODE the most
suitable framework is the affine jet bundle
$(J^{1}\pi,\pi_{10},M)$, see \cite{crampin96, krupkova97, massa94,
saunders89}. We consider an $(n+1)$-dimensional, real and smooth
manifold $M$, which is fibred over $\mathbb{R}$,
$\pi:M\rightarrow\mathbb{R}$. The first jet bundle of $\pi$ is
denoted by $\pi_{10}: J^{1}\pi\to M$,
$\pi_{10}(j_{t}^{1}\gamma)=\gamma(t),$ for $\gamma$ a local
section of $\pi$ and $j_{t}^{1}\gamma$ the first jet of $\gamma$
at $t$. A local coordinate system $(t,x^{i})_{i\in\overline{1,n}}$
on $M$, where $t$ represents the global coordinate on $\mathbb{R}$
and $(x^{i})$ the $\pi$-fiber coordinates, induces a local
coordinate system on $J^{1}\pi,$ denoted by $(t,x^{i},y^{i}).$
Submersion $\pi_{10}$ induces a \emph{natural foliation} on
$J^1\pi$. Coordinates $(t, x^i)$ are transverse coordinates for
this foliation, while $(y^i)$ are coordinates for the leafs of the
foliation.

Throughout the paper we assume that all objects are
$C^{\infty}$-smooth where defined. The ring of smooth functions on
$J^{1}\pi$, the $C^{\infty}$ module of vector fields on $J^{1}\pi$
and the $C^{\infty}$ module of $k$-forms are respectively denoted
by $C^{\infty}(J^{1}\pi)$, ${\mathfrak X}(J^{1}\pi)$ and
$\Lambda^{k}(J^{1}\pi)$. The $C^{\infty}$ module of $(r,s)$-type
tensor fields on $J^{1}\pi$ is denoted by
$\mathcal{T}_{s}^{r}(J^{1}\pi)$ and $\mathcal{T}(J^{1}\pi)$
denotes the tensor algebra on $J^{1}\pi$. By a vector valued
$l$-form ($l\geq0$) on $J^{1}\pi$ we mean an $(1,l)$-type tensor
field on $J^{1}\pi$ that is skew-symmetric in its $l$ arguments.

A parameterized curve on $M$ is a section of $\pi$: $\gamma:
\mathbb{R}\rightarrow M$, $\gamma(t)=(t,x^{i}(t))$. Its
\emph{first jet prolongation} $J^{1}\gamma:
t\in\mathbb{R}\rightarrow J^{1}\gamma(t)=\left(t,x^{i}(t),
{dx^{i}}/{dt}\right)\in J^{1}\pi$ is a section of the fibration
$\pi_{1}:=\pi\circ\pi_{10}:J^{1}\pi\rightarrow\mathbb{R}$.

Let $VJ^{1}\pi$ be the \emph{vertical subbundle} of $TJ^{1}\pi,$
$VJ^{1}\pi=\{\xi\in TJ^{1}\pi,\,\, D\pi_{10}(\xi)=0\}\subset
TJ^{1}\pi$. Its fibers, $V_{u}J^{1}\pi = \operatorname{Ker}
D_{u}\pi_{10}$, $u\in J^{1}\pi$, determine a regular,
$n$-dimensional \emph{vertical distribution}. The vertical
distribution is integrable, being tangent to the natural
foliation. Moreover, $VJ^{1}\pi=spann\{{\partial}/{\partial
y^{i}}\}$ and its annihilators are the \emph{contact 1-forms}
\begin{equation} \delta x^{i}=dx^{i}-y^{i}dt, \quad
i\in\{1,...,n\}. \label{eq:13}\end{equation} One can also view the
vertical subbundle as the image of the \emph{vertical
endomorphism} (or tangent structure)
\begin{equation} J=\frac{\partial}{\partial
y^{i}}\otimes\delta x^{i}. \label{eq:J}\end{equation} The vertical
endomorphism $J$ is a $(1,1)$-type tensor field, and hence a
vector valued 1-form on $J^{1}\pi$, with
$\operatorname{Im}J=VJ^{1}\pi$, $VJ^{1}\pi \subset
\operatorname{Ker}J$ and $J^{2}=0$.

A system \eqref{sode} of second order ordinary differential
equations, whose coefficients depend explicitly on time, can be
identified with a special vector field on $J^{1}\pi$, called a
semispray. A \emph{semispray} is a globally defined vector field
$S$ on $J^{1}\pi$ such that
\begin{equation} J(S)=0 \quad \textrm{and} \quad dt(S)=1. \label{eq:4}\end{equation}
Therefore, a semispray is a vector field $S$ on $J^{1}\pi$
characterized by the property that its integral curves are the
first jet prolongations of sections of $\pi_{1}:\,
J^{1}\pi\rightarrow\mathbb{R}$ . Locally, a semispray has the form
\begin{equation} S=\frac{\partial}{\partial t}+y^{i}\frac{\partial}{\partial
x^{i}}-2G^{i}(t,x,y)\frac{\partial}{\partial
y^{i}},\label{eq:5}\end{equation} where functions $G^{i}$, called
the \emph{semispray coefficients}, are locally defined on
$J^1\pi$.

A parameterized curve $\gamma: I\rightarrow M$ is called a
\emph{geodesic} of $S$ if $S\circ
J^{1}\gamma=\frac{d}{dt}(J^{1}\gamma).$ In local coordinates,
$\gamma(t)=(t,x^{i}(t))$ is a geodesic for the semispray $S$ given
by \eqref{eq:5} if and only if it satisfies the time dependent
system \eqref{sode} of SODE.

\subsection{Fr\"olicher-Nijenhuis theory on $J^{1}\pi$} \label{FN-theory}

In this section we give a short review of the
Fr\"olicher-Nijenhuis theory on $J^{1}\pi$, following
\cite{frolicher56, grifone00, KMS93, deleon89}.

Suppose that $A$ is a vector valued $l$-form on $J^{1}\pi$ and $B$
a (vector valued) $k$-form on $J^{1}\pi$. We can define an
\emph{algebraic derivation} \cite{KMS93} denoted by $i_{A}B$:
\begin{eqnarray*} \label{iaalpha} & & i_AB(X_1,\cdots, X_{k+l-1})=  \\
\nonumber &  & \frac{1}{l!(k-1)!}\sum_{\sigma\in S_{k+l-1}}
\operatorname{sign}(\sigma)\ B\left(A(X_{\sigma(1)},
\cdots,X_{\sigma(l)}), X_{\sigma(l+1)},\cdots, X_{\sigma(k+l-1)}
\right), \end{eqnarray*} where
$X_{1},\cdots,X_{k+l-1}\in{\mathfrak X}(J^{1}\pi)$ and $S_{p}$ is
the permutation group of elements $1,\cdots,p$ . We observe that
$i_{A}B$ is a $(k+l-1)$- form (vector valued form, respectively if
$B$ is vector valued) on $J^{1}\pi$. In \cite{frolicher56,
grifone00} this algebraic derivation is denoted by $B\barwedge A$
and it is called \emph{exterior inner product.}  For $B$ a scalar
form and $l=0$, $A$ is a vector field on $J^{1}\pi$ and $i_{A}B$
is the usual inner product of $k$-form $B$ with respect to vector
field $A$. When $l=1$, then $A$ is a $(1,1)$-type tensor field and
$i_{A}B$ is the $k$-form (or vector valued $k$-form, if $B$ is
vector valued)
\begin{eqnarray}
i_{A}B(X_{1},\cdots,X_{k}) & = & \sum_{i=1}^{k}B(X_{1},\cdots,\,
AX_{i},\cdots,\, X_{k}).\label{eq:inpr11}\end{eqnarray} For any
vector valued $l$-form $A$ on $J^{1}\pi$ we define $i_{A}B=0$, if
$k=0$, which means that $B\in C^{\infty}(J^{1}\pi)$ or
$B\in{\mathfrak X}(J^{1}\pi)$. If $k=1$, we have $i_{A}B=B\circ
A$.

Let $A$ be a vector valued $l$-for$m$ on $J^{1}\pi$, $l\geq0$. The
\emph{exterior derivative} with respect to $A$ is the map
$d_{A}:\Lambda^{k}(J^{1}\pi)\rightarrow\Lambda^{k+l}(J^{1}\pi),\,
k\geq0,$
\begin{eqnarray}
d_{A} & = & i_{A}\circ d-(-1)^{l-1}d\circ
i_{A}.\label{eq:36}\end{eqnarray} In \cite{KMS93}, the exterior
derivative $d_A$ is called the Lie derivative with respect to $A$
and it is denoted by $\mathcal{L}_{A}$. When $A\in{\mathfrak
X}(J^{1}\pi)$ and $k\geq0,$ we obtain $d_{A}=\mathcal{L}_{A}$, the
usual Lie derivative. In this case equation (\ref{eq:36}) is the
well known Cartan's formula. If $A=\operatorname{Id}$, the
identity $(1,1)$-type tensor field on $J^{1}\pi$, then
$d_{\operatorname{Id}}=d$, since
$i_{\operatorname{Id}}\alpha=k\alpha$ for
$\alpha\in\Lambda^{k}(J^{1}\pi)$.

Suppose $A$ and $B$ are vector valued forms on $J^{1}\pi$ of
degrees $l\geq0$ and $k\geq0$, respectively. Then, the
\emph{Fr\"olicher-Nijenhuis bracket }of $A$ and $B$ is the unique
vector valued $(k+l)$-form on $J^{1}\pi$ such that
\cite{frolicher56}
\begin{eqnarray} d_{[A,B]} & = & d_{A}\circ
d_{B}-(-1)^{kl}d_{B}\circ d_{A}.\label{eq:bracket}\end{eqnarray}
When $A$ and $B$ are vector fields, the Fr\"olicher-Nijenhuis
bracket coincides with the usual Lie bracket
$[A,B]=\mathcal{L}_{A}B.$

For a vector field $X\in{\mathfrak X}(J^{1}\pi)$ and a
$(1,1)$-type tensor field $A$ on $J^{1}\pi$, the
Fr\"olicher-Nijenhuis bracket $[X,A]=\mathcal{L}_{X}A$ is the
$(1,1)$-type tensor field \begin{equation}
\mathcal{L}_{X}A=\mathcal{L}_{X}\circ
A-A\circ\mathcal{L}_{X}.\label{eq:LXA}\end{equation}

The Fr\"olicher-Nijenhuis bracket of two $(1,1)$-type tensor
fields $A,\, B$ on $J^{1}\pi$ is the unique vector valued $2$-form
$[A,B]$ defined by \cite{KMS93}
\begin{eqnarray}
[A,B](X,Y) & = & [AX,BY]+[BX,AY]+(A\circ B+B\circ A)[X,Y]\nonumber \\
 &  & -A[X,BY]-A[BX,Y]-B[X,AY]\label{eq:br}\\
 &  & -B[AX,Y],\,\,\forall X,Y\in{\mathfrak X}(J^{1}\pi).\nonumber \end{eqnarray}
In particular, \begin{equation}
\frac{1}{2}[A,A](X,Y)=[AX,AY]+A^{2}[X,Y]-A[X,AY]-A[AX,Y].\label{eq:Nij}\end{equation}
 For a $(1,1)$-type tensor field $A$, the vector valued $2$-form
$N_{A}=\frac{1}{2}[A,A]$ is called the \emph{Nijenhuis tensor} of
$A$.

For the next commutation formulae on $\Lambda^{k}(J^{1}\pi)$,
$k\geq 0$, that will be used throughout the paper, we refer to
\cite[chapter 2]{grifone00}.
\begin{eqnarray}
i_{A}d_{B}-d_{B}i_{A} & = & d_{B\circ A}-i_{[A,B]},\label{eq:com1}\\
\mathcal{L}_{X}i_{A}-i_{A}\mathcal{L}_{X} & = & i_{[X,A]},\label{eq:com2}\\
i_{X}d_{A}+d_{A}i_{X} & = & \mathcal{L}_{AX}-i_{[X,A]},\label{eq:com3}\\
i_{A}i_{B}-i_{B}i_{A} & = & i_{B\circ A}-i_{A\circ
B},\label{eq:com4}\end{eqnarray} for $X\in{\mathfrak X}(J^{1}\pi)$
and $A, B\in\mathcal{T}_{1}^{1}(J^{1}\pi)$.

In this work we will also use the algebraic operator $A^{*}$,
\[ A^{*}B(X_{1},\cdots,X_{k})=B(AX_{1},\cdots,AX_{k}),\] for $A$ a
$(1,1)$-type tensor field and $B$ a (vector valued) $k$-form on
$J^1\pi$.

\subsection{Poincar\'e-type Lemma for semi-basic forms}
\label{semibasic}

For a vector valued $l$-form $A$, we say that a $k$-form $\omega$
on $J^{1}\pi$ is $d_{A}$-\emph{closed} if $d_{A}\omega=0$ and
$d_{A}$-\emph{exact} if there exists
$\theta\in\Lambda^{k-l}(J^{1}\pi)$ such that $\omega=d_{A}\theta.$
For a vector valued $1$-form $A$, from formulae \eqref{eq:bracket}
and \eqref{eq:Nij} we obtain that $d_A^2=d_{N_A}$. Therefore, if
$A$ is not integrable, which means that $N_A\neq 0$, $d_A$-exact
forms may not be $d_A$-closed.

Two forms  $\omega_{1}$ and $\omega_{2}$ on $J^{1}\pi$ are called
\emph{equivalent} (modulo $dt$) if $\omega_{1}\wedge
dt=\omega_{2}\wedge dt$.

For a vector valued $l$-form $A$, we say that a $k$-form
$\theta\in\Lambda^{k}(J^{1}\pi)$ is $d_{A}$-\emph{closed} (modulo
$dt$) if $d_{A}\theta\wedge dt=0$ and $d_{A}$-\emph{exact} (modulo
$dt$) if there exists $\omega$ in $\Lambda^{k-l}(J^{1}\pi)$ such
that $\theta\wedge dt=d_{A}\omega\wedge dt$.

For the vertical endomorphism $J$, its Fr\"olicher-Nijenhuis
tensor is given by
\begin{equation} N_{J}=-\frac{\partial}{\partial
y^{i}}\otimes\delta x^{i}\wedge dt=-J\wedge
dt.\label{eq:NijenhuisJ}\end{equation} Consequently, using formula
\eqref{eq:bracket}, we obtain $d_{J}^{2}=d_{N_J}=-d_{J\wedge
dt}\neq 0$. Therefore, $d_{J}$-exact forms on $J^{1}\pi$ may not
be $d_{J}$-closed. However, we will prove in the first part of
Lemma \ref{lem:(Poincare)} that $d_{J}$-exact (modulo $dt$) forms
are $d_{J}$-closed (modulo $dt$) forms. In the second part of
Lemma \ref{lem:(Poincare)} we will show that the converse is true
for semi-basic $k$-forms, only.

For a $k$-form $\omega$, the following identities can be obtained
immediately
\begin{eqnarray}
\nonumber  i_{J\wedge dt} \omega= (-1)^{k+1} i_J\omega \wedge dt, \\
\label{djdt} d_{J\wedge dt} \omega= (-1)^{k+1} d_J\omega \wedge
dt.
\end{eqnarray}

\begin{defn}
i) A $k$-form $\omega$ on $J^{1}\pi$, $k\geq1$, is called
\emph{semi-basic} if $\omega(X_{1},\cdots,X_{k})$$=0$, when one of
the vectors $X_{i}$, $i\in\{1,...,k\}$, is vertical.

ii) A vector valued $k$-form $A$ on $J^{1}\pi$ is called
\emph{semi-basic} if it takes values in the vertical subbundle and
$A(X_{1},\cdots,X_{k})=0$, when one of the vectors $X_{i}$, $
i\in\{1,...,k\}$ is vertical.
\end{defn}
A semi-basic $k$-form verifies the relation $i_{J}\theta=0$. The
converse is true only for $k=1$. Semi-basic $1$-forms are
annihilators for the vertical distribution and hence locally can
be expressed as
$\theta=\theta_{0}(t,x,y)dt+\theta_{i}(t,x,y)\delta x^{i}$.
Contact 1-forms $\delta x^{i}$ given by formula \eqref{eq:13} are
semi-basic 1-forms.

\begin{defn} \label{def:regtheta}
A semi-basic $1$-form $\theta$ on $J^1\pi$ is called
\emph{non-degenerate} if the $2$-form $d\theta+i_Sd\theta\wedge
dt$ has rank $2n$ on $J^1\pi$.
\end{defn}

If a vector valued $k$-form $A$ is semi-basic, then $J\circ A=0$,
$i_{J}A=0$ and $J^{*}A=0$. The converse is true only for $k=1$. It
follows that the vertical endomorphism $J$ is a vector valued,
semi-basic $1$-form.

Next lemma presents a characterization of forms that are
equivalent (modulo $dt$) to the null form and will be used to
prove a Poincar\'e-type Lemma for semi-basic forms.

\begin{lem} \label{omegadt}
 A $k$-form $\omega$ on $J^{1}\pi$ satisfies the condition
   $\omega\wedge dt=0$ if and only if it is of the form $\omega=i_{S\otimes dt}\omega=(-1)^{k+1}i_{S}\omega\wedge dt$,
   for an arbitrary semispray $S$.
\end{lem}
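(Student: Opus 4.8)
The plan is to separate the statement into its genuinely algebraic part and its one substantive implication. First I would establish the identity $i_{S\otimes dt}\omega = (-1)^{k+1} i_S\omega \wedge dt$, which holds for \emph{every} $k$-form $\omega$ and does not yet use the defining property $dt(S)=1$. Viewing $S\otimes dt$ as a $(1,1)$-type tensor and applying the contraction formula \eqref{eq:inpr11}, I would compute
\[
i_{S\otimes dt}\omega(X_{1},\cdots,X_{k}) = \sum_{i=1}^{k} dt(X_{i})\,\omega(X_{1},\cdots,S,\cdots,X_{k}),
\]
with $S$ inserted in the $i$-th slot. Moving $S$ to the front at the cost of the sign $(-1)^{i-1}$ identifies the right-hand side with $(dt\wedge i_S\omega)(X_1,\cdots,X_k)$, so that $i_{S\otimes dt}\omega = dt\wedge i_S\omega$. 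Since $i_S\omega$ is a $(k-1)$-form, reordering the wedge gives $dt\wedge i_S\omega = (-1)^{k-1} i_S\omega \wedge dt = (-1)^{k+1} i_S\omega \wedge dt$, which is exactly the claimed equality between the two right-hand expressions in the statement.

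For the converse implication I would argue directly: if $\omega = (-1)^{k+1} i_S\omega \wedge dt$, then wedging with $dt$ and using $dt\wedge dt = 0$ yields $\omega\wedge dt = 0$ at once.

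For the forward implication, assume $\omega\wedge dt = 0$ and contract with $S$. The graded Leibniz rule for the interior product gives
\[
0 = i_S(\omega\wedge dt) = (i_S\omega)\wedge dt + (-1)^{k}\,dt(S)\,\omega.
\]
This is the only point at which the semispray condition $dt(S)=1$ enters; it lets me solve for $\omega$ and obtain $\omega = (-1)^{k+1} i_S\omega\wedge dt$, which by the first step equals $i_{S\otimes dt}\omega$.

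I do not anticipate a genuine obstacle, since the argument is short once the contraction trick with $S$ is spotted. The only delicate point is the bookkeeping of signs, both in the graded Leibniz rule and in the reordering $dt\wedge i_S\omega \leftrightarrow i_S\omega\wedge dt$. Because these signs must be consistent across both directions, I would fix one wedge-product convention at the outset and check that the exponent $(-1)^{k+1}$ produced in the forward implication agrees with the one coming from the algebraic identity.
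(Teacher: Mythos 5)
Your proposal is correct and follows essentially the same route as the paper: the identity $i_{S\otimes dt}\omega=(-1)^{k+1}i_{S}\omega\wedge dt$ (which the paper states as "a simple computation" and you verify explicitly), the trivial direction via $dt\wedge dt=0$, and the forward direction by applying $i_{S}$ to $\omega\wedge dt=0$ and using $dt(S)=1$ to solve for $\omega$. Your sign bookkeeping, including $0=i_{S}\omega\wedge dt+(-1)^{k}\omega$, matches the paper's computation exactly.
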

\begin{proof}
A simple computation gives $i_{S\otimes
dt}\omega=(-1)^{k+1}i_{S}\omega\wedge dt$. Therefore, if
$\omega=i_{S\otimes dt}\omega=(-1)^{k+1}i_{S}\omega\wedge dt$ it
follows that $\omega\wedge dt=0$. Conversely, if $\omega\wedge
dt=0$, we apply $i_{S}$ to this identity and get
$0=i_{S}\omega\wedge dt+(-1)^{k}\omega$. Hence,
$\omega=(-1)^{k+1}i_{S}\omega\wedge dt=i_{S\otimes dt}\omega$,
which completes the proof.
\end{proof}
\begin{lem}
\label{lem:(Poincare)}(Poincar\'e-type Lemma) Consider $\theta$ a
$k$-form on $J^{1}\pi$. \begin{itemize}
\item[i)] If $\theta$ is $d_J$-exact (modulo $dt$) then $\theta$
is $d_J$-closed (modulo $dt$).
\item[ii)] If $\theta$ is a semi-basic form and $d_{J}$-closed
(modulo $dt$), then $\theta$ is locally $d_{J}$-exact (modulo
$dt$).
\end{itemize}
\end{lem}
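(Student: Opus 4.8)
The plan is to treat the two parts separately: part (i) follows from purely algebraic manipulation of $d_J$, while part (ii) requires an actual homotopy/Poincar\'e argument along the fibers.

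For part (i) the key observations are that $d_J$ is a graded derivation of degree one on the exterior algebra and that $d_J\,dt=0$, since $i_J\,dt=dt\circ J=0$ and $dt$ is closed. From these it follows that $d_J(\alpha\wedge dt)=d_J\alpha\wedge dt$ for every form $\alpha$. Suppose now $\theta$ is $d_J$-exact modulo $dt$, so $\theta\wedge dt=d_J\omega\wedge dt$ for some $\omega\in\Lambda^{k-1}(J^1\pi)$. Applying $d_J$ to both sides and using the previous remark gives $d_J\theta\wedge dt=d_J^2\omega\wedge dt$. Since $d_J^2=d_{N_J}=-d_{J\wedge dt}$ by \eqref{eq:NijenhuisJ} and \eqref{eq:bracket}, the identity \eqref{djdt} turns the right-hand side into a multiple of $d_J\omega\wedge dt\wedge dt$, which vanishes because $dt\wedge dt=0$. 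Hence $d_J\theta\wedge dt=0$, i.e. $\theta$ is $d_J$-closed modulo $dt$.

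For part (ii) I would first reduce the statement to a problem along the leaves of the foliation. As $\theta$ is semi-basic, it lies in the exterior algebra generated over $C^{\infty}(J^1\pi)$ by $dt$ and the contact forms $\delta x^i$; write $\theta=\beta+dt\wedge\gamma$, where $\beta$ and $\gamma$ contain no factor $dt$, so that $\theta\wedge dt=\beta\wedge dt$ and only $\beta$ matters modulo $dt$. A direct computation gives $d_J\,\delta x^i=-\delta x^i\wedge dt$ and $d_Jf=(\partial f/\partial y^i)\,\delta x^i$ for $f\in C^{\infty}(J^1\pi)$; consequently, for $\beta=\sum_I f_I\,\delta x^I$ one finds $d_J\beta\wedge dt=\sum_I (\partial f_I/\partial y^j)\,\delta x^j\wedge\delta x^I\wedge dt$, every other term carrying a repeated $dt$. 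This shows that, modulo $dt$, the operator $d_J$ acting on forms in the $\delta x^i$ coincides with the fiberwise de Rham differential $d^V$ under the substitution $\delta x^i\leftrightarrow dy^i$, with $(t,x)$ held fixed as parameters. The hypothesis $d_J\theta\wedge dt=0$ then says precisely that $\beta$ is $d^V$-closed on each leaf.

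I would then invoke the classical Poincar\'e lemma on the leaves, which are coordinatized by the $y^i$ and may be taken star-shaped on a chart: the standard homotopy operator built from the vertical dilation $y^i\,\partial/\partial y^i$ produces, fiberwise and smoothly in the parameters $(t,x)$, a form $\tilde\omega$ with $d^V\tilde\omega=\beta$. Translating back through $dy^i\leftrightarrow\delta x^i$ yields a semi-basic $\omega$ with $\beta\wedge dt=d_J\omega\wedge dt$, hence $\theta\wedge dt=d_J\omega\wedge dt$, the desired local $d_J$-exactness modulo $dt$. The main obstacle is precisely in part (ii): one must keep the bookkeeping ``modulo $dt$'' consistent throughout --- checking in particular that the extra terms $d_J\delta x^i=-\delta x^i\wedge dt$ and the $dt\wedge\gamma$ part never survive after wedging with $dt$ --- and verify that the fiberwise primitive supplied by the homotopy depends smoothly on $(t,x)$, so that it assembles into a genuine semi-basic form $\omega$ on the chart rather than a mere family of forms on the individual leaves. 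Lemma \ref{omegadt} is the convenient device for passing between the ``$\wedge\,dt$'' formulation and explicit representatives whenever this identification must be made precise.
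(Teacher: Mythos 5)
Your proposal is correct and follows essentially the same route as the paper: part (i) by applying $d_J$ and using $d_J^2=d_{N_J}=-d_{J\wedge dt}$ together with \eqref{djdt} and $dt\wedge dt=0$, and part (ii) by splitting off the $dt$-part of the semi-basic form, identifying $d_J$ modulo $dt$ with the fiberwise de Rham differential along the leaves of the natural foliation (via $\delta x^i\leftrightarrow dy^i$), and invoking the classical Poincar\'e Lemma with $(t,x^i)$ as smooth parameters. The paper carries out the same reduction in components with Kronecker symbols, while you phrase it through the derivation property of $d_J$ and the identities $d_Jf=(\partial f/\partial y^i)\,\delta x^i$, $d_J\delta x^i=-\delta x^i\wedge dt$; this is a difference of bookkeeping, not of method.
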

\begin{proof}
i) Suppose that $\theta\in\Lambda^{k}(J^{1}\pi)$ is $d_{J}$-exact
(modulo $dt$). Then, there exists
$\omega\in\Lambda^{k-1}(J^{1}\pi)$ such that $\theta\wedge dt
=d_{J}\omega\wedge dt$. If we apply $d_J$ to both sides of this
identity, and use formula \eqref{djdt}, we obtain
\begin{eqnarray*}
d_{J}\theta\wedge dt = d_{J}^{2}\omega\wedge dt = -d_{J\wedge
dt}\omega \wedge dt =0. \end{eqnarray*} which means that the
$k$-form $\theta$ is $d_J$-closed (modulo $dt$).

ii) We have to prove that for $\theta\in\Lambda^{k}(J^{1}\pi)$
semi-basic, with $d_{J}\theta\wedge dt=0$, there exists
$\omega\in\Lambda^{k-1}(J^{1}\pi)$ (locally defined), such that
$\theta\wedge dt=d_{J}\omega\wedge dt$. Since $\theta$ is
semi-basic, it follows that locally it has the form
\begin{eqnarray*}
\theta=\frac{1}{k!}\theta_{i_1...i_k}\delta x^{i_1}\wedge \cdots
\wedge \delta x^{i_k} +
\frac{1}{(k-1)!}\widetilde{\theta}_{i_1...i_{k-1}}\delta
x^{i_1}\wedge \cdots \wedge \delta x^{i_{k-1}}\wedge dt.
\end{eqnarray*}
Using the identities $ i_Jdt=0$, $i_J\delta x^i=0$, and
$i_Jdy^i=\delta x^i$, we obtain
\begin{eqnarray*}
d_J\theta\wedge dt = i_Jd\theta \wedge dt = \frac{1}{k!}
\frac{1}{(k+1)!} \varepsilon^{jj_1...j_k}_{i_1i_2...i_{k+1}}
\frac{\partial \theta_{j_1...j_k}}{\partial y^j} \delta
x^{i_1}\wedge \cdots \wedge \delta x^{i_{k+1}}\wedge dt,
\end{eqnarray*}
where $\varepsilon^{jj_1...j_k}_{i_1i_2...i_{k+1}}$ is Kronecker's
symbol. Hence
\begin{eqnarray*}
\frac{1}{(k+1)!} \varepsilon^{jj_1...j_k}_{i_1i_2...i_{k+1}}
\frac{\partial \theta_{j_1...j_k}}{\partial y^j}=0.
\end{eqnarray*} If one considers the transverse coordinates
$t,x^i$ of the natural foliation as parameters, one can use
Poincar\'e Lemma on the leafs of this foliation to obtain (locally
defined) functions $\omega_{j_i...j_{k-1}}(t,x^i,y^i)$ such that
\begin{eqnarray}
\theta_{i_1...i_k}=\frac{1}{(k-1)!}
\varepsilon^{jj_1...j_{k-1}}_{i_1i_2...i_{k}} \frac{\partial
\omega_{j_1...j_{k-1}}}{\partial y^j}. \label{tomega}
\end{eqnarray} Therefore, one can consider the (locally defined) semi-basic $(k-1)$-form
\begin{eqnarray*}
\omega=\frac{1}{(k-1)!}\omega_{i_1...i_{k-1}}\delta x^{i_1}\wedge
\cdots \wedge \delta x^{i_{k-1}}. \end{eqnarray*} In view of
identity \eqref{tomega} we have $\theta\wedge dt = d_J\omega
\wedge dt$, which means that $\theta$ is $d_J$-exact (modulo
$dt$).
\end{proof}

For semi-basic forms, the differential operator $d_{J}$ is closely
related to the exterior differential $d''$ along the leafs of the
natural foliation, studied by Vaisman in \cite{vaisman71}. This
relation as well as the proof of Theorem 3.1 from \cite{vaisman71}
can be used to give a different proof for Lemma
\ref{lem:(Poincare)}. This proof will require to fix a
distribution supplementary to the vertical distribution, which is
always possible, see Section \ref{nonlinear_connection}.

For semi-basic $1$ and $2$-forms a Poincar\'e-type Lemma and its
usefulness for the inverse problem of the calculus of variations
is discussed in Marmo et al. \cite{marmo89}. For forms along the
tangent bundle projection a similar result as in Lemma
\ref{lem:(Poincare)} has been shown in \cite[Prop. 2.1]{sarlet95}.

In Section \ref{section_lvf} we will use Poincar\'e-type Lemma
\ref{lem:(Poincare)} to find necessary and sufficient conditions
for a semi-basic $1$-form on $J^{1}\pi$ to coincide, or to be
equivalent (modulo $dt$), with the Poincar\'e-Cartan 1-form of
some Lagrangian function.

\section{Geometric objects induced by a semispray}
\label{gobjects}

In this section we present some geometric structures that can de
derived from a semispray using the Fr\"olicher-Nijenhuis theory:
nonlinear connection, Jacobi endomorphism, dynamical covariant
derivative. See \cite{cantrijn96, crampin96, deleon88, deleon89,
miron94, sarlet95}. These structures will be used later to express
necessary and sufficient conditions for a given semispray to be a
Lagrangian vector field.

\subsection{Nonlinear connection} \label{nonlinear_connection}

A \emph{nonlinear connection} on the first jet bundle $J^{1}\pi$
is an $(n+1)$-dimensional distribution $H:u\in J^1\pi \mapsto
H_u\subset T_uJ^1\pi$, supplementary to the vertical distribution
$VJ^{1}\pi$. This means that for each $u \in J^1\pi$, we have the
direct decomposition $T_uJ^1\pi=H_u\oplus V_u$.

A semispray $S$ induces a nonlinear connection on $J^{1}\pi$,
determined by the \emph{almost product structure}, or dynamical
connection \cite{deleon88}
\begin{equation} \Gamma=-\mathcal{L}_{S}J+S\otimes
dt, \quad \Gamma^{2}=\operatorname{Id}. \label{eq:6}\end{equation}
The \emph{horizontal projector} that corresponds to this almost
product structure is \begin{equation}
h=\frac{1}{2}\left(\operatorname{Id} - \mathcal{L}_{S}J+S\otimes
dt\right)\label{eq:7}\end{equation} and the \emph{vertical
projector} is $v=\operatorname{Id}-h.$ Note that in this paper we
chose to work with \emph{the weak horizontal projector} $h$
defined by formula \eqref{eq:7}. We can also consider
$h_{0}=S\otimes dt$ and $h_{1}=h-h_{0}$, the \emph{strong
horizontal projector}. The horizontal subspace is the eigenspace
corresponding to the eigenvalue $+1$ of $\Gamma,$ while the
vertical subspace is the eigenspace corresponding to the
eigenvalue $-1$. The horizontal subspace is spanned by $S$ and by
$$ \frac{\delta}{\delta x^{i}}=\frac{\partial}{\partial
x^{i}}-N_{i}^{j}\frac{\partial}{\partial y^{j}}, \quad
\textrm{where}  \quad N_{j}^{i}=\frac{\partial G^{i}}{\partial
y^{j}}.$$ From now on, whenever a semispray will be given, we
prefer to work with the following adapted basis and cobasis
\begin{equation}
\left\{ S,\frac{\delta}{\delta x^{i}},\frac{\partial}{\partial
y^{i}}\right\} , \quad \{dt,\delta x^{i},\delta y^{i}\},
\label{eq:11}\end{equation} with $\delta x^{i}=dx^{i}-y^{i}dt$ the
\emph{contact 1-forms} and
\begin{equation} \delta
y^{i}=dy^{i}+N_{j}^{i}dx^{j}+N_{0}^{i}dt,\quad
N_{0}^{i}=2G^{i}-N_{j}^{i}y^{j}.\label{eq:14}\end{equation}
Functions $N_{j}^{i}$ and $N_{0}^{i}$ are the coefficients of the
nonlinear connection induced by the semispray $S$. The $1$-forms
$\delta y^i$ are annihilators for the horizontal distribution.

With respect to the adapted basis and cobasis \eqref{eq:11} the
almost product structure can be locally expressed as
$$ \Gamma=S\otimes dt+\frac{\delta}{\delta x^{i}}\otimes\delta
x^{i}-\frac{\partial}{\partial y^{i}}\otimes\delta y^{i}.$$
Therefore, the horizontal and vertical projectors are locally
expressed as
$$ h=S\otimes dt+\frac{\delta}{\delta
x^{i}}\otimes\delta x^{i}, \quad v=\frac{\partial}{\partial
y^{i}}\otimes\delta y^{i}.$$ We consider the (1,1)-type tensor
field, which corresponds to the almost complex structure in the
autonomous case,
\begin{equation}
\mathbb{F}=h\circ\mathcal{L}_{S}h-J. \label{eq:21}\end{equation}
Tensor $\mathbb{F}$ satisfies $\mathbb{F}^{3}+\mathbb{F}=0,$ which
means that it is an $f(3,1)$ structure. It can be expressed
locally as
$$\mathbb{F}=\frac{\delta}{\delta x^{i}}\otimes\delta
y^{i}-\frac{\partial}{\partial y^{i}}\otimes\delta x^{i}.$$ For
some useful future calculus, we will also need the next formulae
\begin{eqnarray} \left[\frac{\delta}{\delta
x^{i}},\frac{\delta}{\delta x^{j}}\right] & = &
R_{ij}^{k}\frac{\partial}{\partial y^{k}},\quad
R_{jk}^{i}=\frac{\delta N_{j}^{i}}{\delta x^{k}}-\frac{\delta N_{k}^{i}}{\delta x^{j}},\label{eq:20} \\
\left[\frac{\delta}{\delta x^{i}},\frac{\partial}{\partial y^{j}}\right] & = &
\frac{\partial N_{i}^{k}}{\partial y^{j}}\frac{\partial}{\partial y^{k}}=
\frac{\partial^{2}G^{k}}{\partial y^{i}\partial y^{j}}\frac{\partial}{\partial y^{k}},\label{eq:20'}
\end{eqnarray}
for the Lie brackets of the vector fields of the adapted basis
\eqref{eq:11}. We also have:
\begin{eqnarray}
\mathcal{L}_{S}S=0, & \mathcal{L}_{S}
\displaystyle\frac{\delta}{\delta x^{i}}= N_{i}^{j}
\frac{\delta}{\delta x^{j}} + R_{i}^{j} \frac{\partial}{\partial
y^{j}}, & \mathcal{L}_{S}\frac{\partial}{\partial y^{i}} = -
\frac{\delta}{\delta x^{i}} + N_{i}^{j}\frac{\partial}{\partial y^{j}}, \label{eq:LSV}\\
\mathcal{L}_{S}dt=0, & \mathcal{L}_{S}\delta x^{i}=
-N_{j}^{i}\delta x^{j}+\delta y^{i}, & \mathcal{L}_{S}\delta y^{i}
=  - R_{j}^{i}\delta x^{j}-N_{j}^{i}\delta y^{j},
\label{eq:LSCV}\end{eqnarray} where
\begin{equation}
R_{j}^{i}=2\frac{\partial G^{i}}{\partial x^{j}}-\frac{\partial
G^{i}}{\partial y^{k}}\frac{\partial G^{k}}{\partial
y^{j}}-S\left(\frac{\partial G^{i}}{\partial y^{j}}\right).
\label{eq:18}\end{equation} $R_{j}^{i}$ are the components of a
$(1,1)$-type tensor field on $J^{1}\pi$, known as the \emph{second
invariant in KCC-theory} \cite{antonelli01}, \emph{the Douglas
tensor} \cite{douglas41, grifone00} or as the \emph{Jacobi
endomorphism} \cite{cantrijn96, crampin84a, jerie02, sarlet95}.

Using formulae \eqref{eq:J}, \eqref{eq:7} and the set of relations
\eqref{eq:LSV}, \eqref{eq:LSCV}, we obtain
\begin{eqnarray}
\mathcal{L}_{S}J & = & -\frac{\delta}{\delta x^{i}}\otimes\delta x^{i}+\frac{\partial}{\partial y^{i}}\otimes\delta y^{i},\label{eq:23}\\
\mathcal{L}_{S}h & = & \frac{\delta}{\delta x^{i}}\otimes\delta
y^{i}+R_{i}^{j}\frac{\partial}{\partial y^{j}}\otimes\delta
x^{i}.\label{eq:24}\end{eqnarray}

According to formulae \eqref{eq:br}, \eqref{eq:Nij},
\eqref{eq:20}, \eqref{eq:20'}, and
$[J,\Gamma]=2[J,h]-[J,\operatorname{Id}]=2[J,h]$ one can prove the
following proposition.
\begin{prop}
i) The weak torsion tensor field of the nonlinear connection
$\Gamma$ vanishes: $[J,h]=0,$ which is equivalent also with
$[J,\Gamma]=0$.

ii) The curvature tensor $R=N_h$ of the nonlinear connection
$\Gamma$ is a vector valued semi-basic 2-form, locally given by
\begin{equation}
R=\frac{1}{2}[h,h]=\frac{1}{2}R_{ij}^{k}\frac{\partial}{\partial
y^{k}}\otimes\delta x^{i}\wedge\delta
x^{j}+R_{i}^{j}\frac{\partial}{\partial y^{j}}\otimes
dt\wedge\delta x^{i},\label{eq:19}\end{equation} where
$R_{ij}^{k}$ and $R_{i}^{j}$ are given in formulae \eqref{eq:20}
and \eqref{eq:18}.
\end{prop}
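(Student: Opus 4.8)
The plan is to prove both statements by direct evaluation on the adapted basis \eqref{eq:11}, using that the Fr\"olicher-Nijenhuis bracket of two $(1,1)$-type tensor fields is itself a vector valued $2$-form, hence $C^{\infty}$-bilinear and skew-symmetric. It therefore suffices to compute on the unordered pairs drawn from $\{S,\delta/\delta x^{i},\partial/\partial y^{i}\}$. Throughout I would use the explicit actions $J(S)=0$, $J(\delta/\delta x^{i})=\partial/\partial y^{i}$, $J(\partial/\partial y^{i})=0$ and $h(S)=S$, $h(\delta/\delta x^{i})=\delta/\delta x^{i}$, $h(\partial/\partial y^{i})=0$, together with the Lie brackets recorded in \eqref{eq:20}, \eqref{eq:20'} and \eqref{eq:LSV}. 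A preliminary observation that streamlines everything is that on this basis $h\circ J=0$ and $J\circ h=J$, so the composite $(J\circ h+h\circ J)$ occurring in \eqref{eq:br} acts simply as $J$, and in particular annihilates every vertical vector.

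For part i) I would feed $A=J$, $B=h$ into \eqref{eq:br} and run through the five pair-types $(\partial/\partial y^{i},\partial/\partial y^{j})$, $(S,\partial/\partial y^{i})$, $(\delta/\delta x^{i},\partial/\partial y^{j})$, $(S,\delta/\delta x^{i})$ and $(\delta/\delta x^{i},\delta/\delta x^{j})$. The first three collapse almost immediately, since $J$ or $h$ annihilates the vertical outputs. In the pair $(S,\delta/\delta x^{i})$ the several contributions carrying $N_{i}^{j}$ cancel in pairs and the horizontal term $\delta/\delta x^{i}$ produced by $[S,\partial/\partial y^{i}]$ is cancelled by an equal and opposite one, leaving zero. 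The only genuinely nontrivial case is $(\delta/\delta x^{i},\delta/\delta x^{j})$, where \eqref{eq:br} reduces to $[\partial/\partial y^{i},\delta/\delta x^{j}]+[\delta/\delta x^{i},\partial/\partial y^{j}]$; by \eqref{eq:20'} this equals $\left(\partial^{2}G^{k}/\partial y^{i}\partial y^{j}-\partial^{2}G^{k}/\partial y^{j}\partial y^{i}\right)\partial/\partial y^{k}$, which vanishes by symmetry of the second partial derivatives. Having established $[J,h]=0$, the equivalence with $[J,\Gamma]=0$ is immediate from $\Gamma=2h-\operatorname{Id}$ together with $[J,\operatorname{Id}]=0$, so that $[J,\Gamma]=2[J,h]$, exactly as indicated before the statement.

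For part ii) I would use $R=N_{h}=\frac{1}{2}[h,h]$ in the form $N_{h}(X,Y)=[hX,hY]+h[X,Y]-h[X,hY]-h[hX,Y]$ obtained from \eqref{eq:Nij} and $h^{2}=h$. Inserting a vertical argument $\partial/\partial y^{j}$ and writing $v=\operatorname{Id}-h$ gives $N_{h}(X,\partial/\partial y^{j})=h[vX,\partial/\partial y^{j}]$ by additivity of the Lie bracket; since $v$ kills both $S$ and $\delta/\delta x^{i}$ while $[\partial/\partial y^{i},\partial/\partial y^{j}]=0$, this is zero, which shows that $R$ is semi-basic. On horizontal arguments $hX=X$, $hY=Y$, and the formula simplifies to $N_{h}(X,Y)=v[X,Y]$: for $X=\delta/\delta x^{i}$, $Y=\delta/\delta x^{j}$ this yields $v[\delta/\delta x^{i},\delta/\delta x^{j}]=R_{ij}^{k}\partial/\partial y^{k}$ by \eqref{eq:20}, and for $X=S$, $Y=\delta/\delta x^{i}$ it yields $v[S,\delta/\delta x^{i}]=R_{i}^{j}\partial/\partial y^{j}$ by \eqref{eq:LSV}. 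Reassembling these values against the cobasis $\{dt,\delta x^{i},\delta y^{i}\}$, and using that $v$ takes values in the vertical distribution, produces exactly the expression \eqref{eq:19}.

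I expect the only real obstacle to be the careful bookkeeping of the seven terms of \eqref{eq:br} in the $(S,\delta/\delta x^{i})$ and $(\delta/\delta x^{i},\delta/\delta x^{j})$ cases of part i); every other evaluation is essentially forced once one records how $J$ and $h$ act on the adapted basis. The conceptual content of part i) is concentrated in the single symmetry $\partial^{2}G^{k}/\partial y^{i}\partial y^{j}=\partial^{2}G^{k}/\partial y^{j}\partial y^{i}$, and that of part ii) in the reduction $N_{h}(X,Y)=v[X,Y]$ for horizontal $X,Y$, both of which are routine to verify.
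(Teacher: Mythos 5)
Your proof is correct and is essentially the paper's own approach: the paper gives no detailed proof of this proposition, stating only that it follows from formulae \eqref{eq:br}, \eqref{eq:Nij}, \eqref{eq:20}, \eqref{eq:20'} and the identity $[J,\Gamma]=2[J,h]-[J,\operatorname{Id}]=2[J,h]$, and your basis-by-basis evaluation carries out exactly that computation in full. The streamlining identities you isolate ($h\circ J=0$, $J\circ h=J$, and $N_h(X,Y)=v[X,Y]$ on horizontal arguments, $N_h(X,\cdot)=0$ on vertical ones) are all valid, and the two genuinely nontrivial cases reduce, as you say, to the symmetry of $\partial^{2}G^{k}/\partial y^{i}\partial y^{j}$ and to formulae \eqref{eq:20}, \eqref{eq:LSV}.
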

The \emph{Jacobi endomorphism} can be defined as follows.
\begin{equation}
\Phi=v\circ\mathcal{L}_{S}h=\mathcal{L}_{S}h-\mathbb{F}-J.\label{eq:22}\end{equation}
It is a semi-basic, vector valued 1-form and satisfies
$\Phi^{2}=0$. Locally, it can be expressed as
\begin{eqnarray} \Phi=R_{i}^{j}\frac{\partial}{\partial
y^{j}}\otimes\delta x^{i}.\label{localphi}\end{eqnarray} As we can
see from formula \eqref{eq:19}, the Jacobi endomorphism is part of
the curvature tensor $R$, which is the \emph{third invariant in
KCC theory}, \cite{antonelli01}. Moreover, as we will see in the
next proposition, the curvature tensor can be obtained, using
Fr\"olicher-Nijenhuis theory, from the Jacobi endomorphism. The
result is similar to the one from \cite{sarlet95}, where different
techniques are used.

\begin{prop}
The Jacobi endomorphism and the curvature of the nonlinear
connection are related by the following formulae.
\begin{eqnarray}
\Phi & = & i_{S}R, \label{eq:23'} \\
\left[J,\Phi\right] & = & 3R + \Phi\wedge dt. \label{eq:24'}
\end{eqnarray}
\end{prop}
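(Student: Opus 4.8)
The plan is to prove the first identity \eqref{eq:23'} by a coordinate-free computation and then to reduce the second identity \eqref{eq:24'} to an evaluation in the adapted frame \eqref{eq:11}. For \eqref{eq:23'}, I would start from $R=N_h=\tfrac12[h,h]$ and contract the Nijenhuis tensor with the semispray. Since $h=S\otimes dt+\frac{\delta}{\delta x^i}\otimes\delta x^i$ while $dt(S)=1$ and $\delta x^i(S)=0$, we have $hS=S$; combining this with $h^2=h$, formula \eqref{eq:Nij} collapses to
\begin{eqnarray*}
(i_SR)(X)=N_h(S,X)=\tfrac12[h,h](S,X)=[S,hX]-h[S,hX]=v[S,hX].
\end{eqnarray*}
On the other hand, by \eqref{eq:22} and \eqref{eq:LXA} one has $\Phi(X)=v(\mathcal{L}_Sh)(X)=v\bigl([S,hX]-h[S,X]\bigr)$, and the relation $v\circ h=0$ kills the second term, leaving $\Phi(X)=v[S,hX]$. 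Comparing the two expressions gives $\Phi=i_SR$.

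For \eqref{eq:24'}, I would first simplify the defining formula \eqref{eq:br} for the Fr\"olicher--Nijenhuis bracket $[J,\Phi]$. Since $J$ and $\Phi$ both take values in the vertical bundle and annihilate vertical vectors, we get $J\circ\Phi=\Phi\circ J=0$, so the term $(J\Phi+\Phi J)[X,Y]$ drops out and only six terms remain. Next I would evaluate $[J,\Phi]$ on the adapted basis $\{S,\frac{\delta}{\delta x^i},\frac{\partial}{\partial y^i}\}$, using $JS=\Phi S=0$, $J\frac{\partial}{\partial y^i}=\Phi\frac{\partial}{\partial y^i}=0$, $J\frac{\delta}{\delta x^i}=\frac{\partial}{\partial y^i}$ and $\Phi\frac{\delta}{\delta x^i}=R_i^j\frac{\partial}{\partial y^j}$ from \eqref{localphi}. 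Because $J$ and $\Phi$ kill $S$, the verticals, and all vertical outputs, every pair other than $(\frac{\delta}{\delta x^i},\frac{\delta}{\delta x^j})$ and $(S,\frac{\delta}{\delta x^i})$ evaluates to zero. Invoking the bracket relations \eqref{eq:20}, \eqref{eq:20'} and \eqref{eq:LSV}, the two surviving evaluations reduce to
\begin{eqnarray*}
[J,\Phi]\Bigl(\frac{\delta}{\delta x^i},\frac{\delta}{\delta x^j}\Bigr)=\Bigl(\frac{\partial R_j^k}{\partial y^i}-\frac{\partial R_i^k}{\partial y^j}\Bigr)\frac{\partial}{\partial y^k},\qquad
[J,\Phi]\Bigl(S,\frac{\delta}{\delta x^i}\Bigr)=2R_i^j\frac{\partial}{\partial y^j}.
\end{eqnarray*}

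It then remains to match these against the right-hand side. Reading off the components of $3R+\Phi\wedge dt$ from \eqref{eq:19} and \eqref{localphi}, the $dt\wedge\delta x^i$ coefficient is $3R_i^j-R_i^j=2R_i^j$ (the $-R_i^j$ coming from $\Phi\wedge dt=-R_i^j\frac{\partial}{\partial y^j}\otimes dt\wedge\delta x^i$), which reproduces exactly the value $2R_i^j\frac{\partial}{\partial y^j}$ on $(S,\frac{\delta}{\delta x^i})$, while the $\delta x^i\wedge\delta x^j$ coefficient of $3R$ gives $3R_{ij}^k\frac{\partial}{\partial y^k}$. Since every other pair vanishes on both sides, the identity follows once the two displayed evaluations agree with these components.

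The main obstacle is therefore the matching of the $\delta x^i\wedge\delta x^j$ part, which demands the Bianchi-type identity
\begin{eqnarray*}
3R_{ij}^k=\frac{\partial R_j^k}{\partial y^i}-\frac{\partial R_i^k}{\partial y^j}
\end{eqnarray*}
relating the Jacobi endomorphism \eqref{eq:18} to the curvature coefficients \eqref{eq:20}. I would establish it by differentiating the definition \eqref{eq:18} of $R^i_j$ with respect to $y$, substituting $N_j^i=\partial G^i/\partial y^j$, and regrouping via \eqref{eq:20}; this is the classical third KCC identity for a semispray. With this identity in hand the first displayed evaluation equals $3R_{ij}^k\frac{\partial}{\partial y^k}$, both identities are verified, and the proof is complete.
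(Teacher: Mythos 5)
Your proposal is correct and takes essentially the same route as the paper: the first identity is proved by contracting $N_h=\tfrac{1}{2}[h,h]$ with $S$ and using $hS=S$, $h^{2}=h$, $v\circ h=0$ to reduce both sides to $v[S,hX]$, and the second by computing $[J,\Phi]$ in the adapted frame and invoking the identity $\frac{\partial R_{j}^{k}}{\partial y^{i}}-\frac{\partial R_{i}^{k}}{\partial y^{j}}=3R_{ij}^{k}$. The paper likewise leaves that last identity to ``a direct computation,'' so your level of detail matches the original proof.
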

\begin{proof}
Since $2R=[h,h]$, for $X$ in ${\mathfrak X}(J^{1}\pi)$, we have $$
2(i_{S}R)(X) = [h,h](S,X) = 2[S,hX]-2h[S,hX]=2v[S,hX] =
2\Phi(X),$$ which proves formula \eqref{eq:23'}. To prove formula
\eqref{eq:24'} we use the local expression \eqref{eq:J} and
\eqref{localphi} of $J$ and $\Phi$ as well as the identity
$$ \frac{\partial R_{j}^{k}}{\partial y^{i}}-\frac{\partial R_{i}^{k}}{\partial
y^{j}}=3R_{ij}^{k},$$ which follows by a direct computation. The
Fr\"olicher-Nijenhuis bracket of $J$ and $\Phi$ is given by
\begin{eqnarray*}
[J,\Phi] & = & 2R_{i}^{j}\frac{\partial}{\partial y^{j}}\otimes
dt\wedge\delta x^{i} +\frac{1}{2}\left(\frac{\partial
R_{j}^{k}}{\partial y^{i}}-
\frac{\partial R_{i}^{k}}{\partial y^{j}}\right)\frac{\partial}{\partial y^{k}}\otimes\delta x^{i}\wedge\delta x^{j}\\
 & = & 2dt\wedge\Phi+\frac{3}{2}R_{ij}^{k}\frac{\partial}{\partial y^{k}}\otimes\delta x^{i}\wedge\delta x^{j}.\end{eqnarray*}
Using the above formula and formula \eqref{eq:19} that defines the
curvature $R$ we obtain formula \eqref{eq:24'}, which completes
the proof. \end{proof}

The following $(1,1)$-type tensor field $\Psi$ will allow us to
express in a simpler form the dynamical covariant derivative
induced by $S$, which will be discussed in the next section.
\begin{eqnarray}
\Psi & = & h\circ\mathcal{L}_{S}h+v\circ\mathcal{L}_{S}v=\Gamma\circ\mathcal{L}_{S}h = \mathbb{F}+J-\Phi,\label{eq:Psi}\\
\Psi & = & \frac{\delta}{\delta x^{i}}\otimes\delta
y^{i}-R_{i}^{j}\frac{\partial}{\partial y^{j}}\otimes\delta
x^{i}.\label{eq:Psilocal}\end{eqnarray}

\subsection{Dynamical covariant derivative} \label{dcv}

For a semispray $S$ there are various possibilities to define a
tensor derivation on $J^{1}\pi$. Such derivation was considered
first by Kosambi in \cite{kosambi35}, with the name of
biderivative. This derivation has been rediscovered latter and was
called the dynamical covariant derivative \cite{carinena91}. It
can be defined either as a derivation along the bundle projection
$\pi_{10}$, as in \cite{cantrijn96, crampin96, jerie02, mestdag01,
sarlet95} or as a derivation on the total space $TJ^1\pi$,
\cite{massa94, morando95}.

In this section we use Fr\"olicher-Nijenhuis theory to define the
dynamical covariant derivative as a tensor derivation on
$J^{1}\pi$, following the time independent case developed in
\cite{bucataru09}. In Proposition
\ref{pro:The-dynamical-covariant} we present some useful
commutation rules of the dynamical covariant derivative with the
geometric structures studied in the previous sections.

A map $\nabla: \mathcal{T}(J^{1}\pi)
\rightarrow\mathcal{T}(J^{1}\pi)$ is a \emph{tensor derivation} on
$\mathcal{T}(J^{1}\pi)$ if it satisfies the following conditions.
\begin{itemize}
\item[i)] $\nabla$ is $\mathbb{R}$-linear.
\item[ii)] $\nabla$ preserves the type of tensor fields.
\item[iii)] $\nabla$ obeys the Leibnitz rule: $\nabla(T\otimes
S)=\nabla T\otimes S+T\otimes\nabla S$, $\forall
T,S\in\mathcal{T}(J^{1}\pi)$.
\item[iv)] $\nabla$ commutes with any contractions.
\end{itemize}

For a semispray $S$ on $J^{1}\pi,$ we define the
$\mathbb{R}$-linear map $\nabla_{0}:{\mathfrak
X}(J^{1}\pi)\rightarrow{\mathfrak X}(J^{1}\pi)$
\begin{equation}
\nabla_{0}X=h[S,hX]+v[S,vX].\label{eq:25}\end{equation} It follows
that $\nabla_{0}(fX)=S(f)X+f\nabla_{0}X$, for all
$f\in\mathcal{C}^{\infty}(J^{1}\pi)$ and $X\in{\mathfrak
X}(J^{1}\pi)$.

Since any tensor derivation on $\mathcal{T}(J^{1}\pi)$ is
completely determined by its action over the smooth functions and
vector fields on $J^{1}\pi$, there exists a unique tensor
derivation $\nabla$ on $J^{1}\pi$ such that
$\nabla\vert_{\mathcal{C}^{\infty}(J^{1}\pi)}=S$ and $
\nabla\vert_{{\mathfrak X}(J^{1}\pi)}=\nabla_{0}$. This tensor
derivation is called the \emph{dynamical covariant derivative}
induced by the semispray $S$.

Next, we will obtain some alternative expressions for the action
of the dynamical covariant derivative $\nabla$ on ${\mathfrak
X}(J^{1}\pi)$, $\Lambda^{k}(J^{1}\pi)$ and $(1,1)$-type tensor
fields on $J^{1}\pi.$

From formula \eqref{eq:25} we obtain
$$\nabla\vert_{{\mathfrak X}(J^{1}\pi)}=h\circ\mathcal{L}_{S}\circ
h+v\circ\mathcal{L}_{S}\circ v = \mathcal{L}_{S} +
h\circ\mathcal{L}_{S}h+v\circ\mathcal{L}_{S}v.$$ Using the
$(1,1)$-type tensor field $\Psi$ defined in formula
\eqref{eq:Psi}, we obtain the following expression of the
dynamical covariant derivative.
\begin{equation} \nabla\vert_{{\mathfrak
X}(J^{1}\pi)}=\mathcal{L}_{S}+\Psi.\label{eq:26}\end{equation}

Since $\nabla$ satisfies the Leibnitz rule, we deduce that the
action of $\nabla$ on $k$-forms is given by
\begin{equation}
\nabla\vert_{\Lambda^{k}(J^{1}\pi)}=\mathcal{L}_{S}-i_{\Psi}.\label{eq:27}\end{equation}
Above formula \eqref{eq:27} implies that $\nabla$ is a degree zero
derivation on $\Lambda^{k}(J^{1}\pi)$. Therefore \cite[p.
69]{KMS93}, it can be uniquely written as a sum of a Lie
derivation, which is $\mathcal{L}_{S}$, and an algebraic
derivation, given by $i_{\Psi}$.

Similarly, we deduce that the action of $\nabla$ on a vector
valued $k$-form $A$ on $J^{1}\pi$ is given by
\begin{equation}
\nabla A = \mathcal{L}_{S}A + \Psi\circ
A-i_{\Psi}A.\label{eq:28}\end{equation}
\begin{prop}
\label{pro:The-dynamical-covariant} The dynamical covariant
derivative induced by a semispray $S$ has the following
properties.

i) $\nabla S=0$ and $\nabla i_{S}=i_{S}\nabla$.

ii) $\nabla h=\nabla v=0,$ which means that $\nabla$ preserves by
parallelism the horizontal and vertical distributions.

iii) $\nabla J=\nabla\mathbb{F}=0,$ which means that $\nabla$ acts
similarly on both horizontal and vertical distributions.

iv) The restriction of $\nabla$ to $\Lambda^{k}(J^{1}\pi)$ and the
exterior differential operator $d$ satisfy the commutation formula
\begin{equation}
d\nabla-\nabla d=d_{\Psi}.\label{eq:30}\end{equation}

v) The restriction of $\nabla$ to $\Lambda^{k}(J^{1}\pi)$
satisfies the commutation rule
\begin{equation} \nabla i_{A}-i_{A}\nabla=i_{\nabla
A}\label{eq:31}\end{equation} for a $(1,1)$- type tensor field $A$
on $J^{1}\pi.$ Hence the algebraic derivations with respect to $h,
v, J, \mathbb{F}$ commute with
$\nabla\vert_{\Lambda^{k}(J^{1}\pi)}.$
\end{prop}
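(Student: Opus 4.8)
The plan is to establish the five assertions in order, reducing i)--iii) to the action of $\nabla$ on the adapted basis \eqref{eq:11} and iv)--v) to the representations \eqref{eq:27}, \eqref{eq:28} together with the commutation formulae of Section \ref{FN-theory}. First I would compute $\nabla$ on the adapted basis directly from its defining formula \eqref{eq:25}. Since $hS=S$ and $vS=0$ we get $\nabla S = h[S,S]=0$; since $\frac{\delta}{\delta x^{i}}$ is horizontal and $\frac{\partial}{\partial y^{i}}$ is vertical, the Lie brackets \eqref{eq:LSV} yield $\nabla\frac{\delta}{\delta x^{i}}=N_{i}^{j}\frac{\delta}{\delta x^{j}}$ and $\nabla\frac{\partial}{\partial y^{i}}=N_{i}^{j}\frac{\partial}{\partial y^{j}}$, and dually (using that $\nabla$ commutes with the contraction pairing) $\nabla dt=0$, $\nabla\delta x^{i}=-N_{j}^{i}\delta x^{j}$, $\nabla\delta y^{i}=-N_{j}^{i}\delta y^{j}$. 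This already gives $\nabla S=0$, the first half of i). The second half, $\nabla i_{S}=i_{S}\nabla$, follows because $\nabla$ commutes with contractions and $\nabla S=0$: for any form $\omega$ one has $\nabla(i_{S}\omega)=i_{\nabla S}\omega+i_{S}\nabla\omega=i_{S}\nabla\omega$.

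For ii) the cleanest route is structural: the defining formula \eqref{eq:25} manifestly sends horizontal vector fields to horizontal and vertical to vertical, so writing $X=hX+vX$ gives $h(\nabla X)=\nabla(hX)$, whence $(\nabla h)(X)=\nabla(hX)-h(\nabla X)=0$; then $\nabla v=0$ follows from $v=\operatorname{Id}-h$ and $\nabla\operatorname{Id}=0$. For iii) I would substitute the basis formulae above into the local expressions of $J$ (see \eqref{eq:J}) and of $\mathbb{F}$ and expand by the Leibniz rule; in each case the two terms carrying $N_{i}^{j}$ cancel after relabelling indices, giving $\nabla J=0=\nabla\mathbb{F}$. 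For $\nabla J$ one can avoid coordinates by combining \eqref{eq:28} with \eqref{eq:23} and the identities $\Psi\circ J=\frac{\delta}{\delta x^{i}}\otimes\delta x^{i}$ and $J\circ\Psi=\frac{\partial}{\partial y^{i}}\otimes\delta y^{i}$; a corresponding coordinate-free argument for $\nabla\mathbb{F}$ uses the composition identities $\mathbb{F}\circ J=h-S\otimes dt$ and $\mathbb{F}\circ(h-S\otimes dt)=-J$ together with $\nabla h=\nabla J=0$ and $(\nabla\mathbb{F})(S)=\nabla(\mathbb{F}S)-\mathbb{F}(\nabla S)=0$, which force $\nabla\mathbb{F}$ to annihilate every element of the basis.

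Parts iv) and v) are purely algebraic. For iv), using $\nabla|_{\Lambda^{k}(J^{1}\pi)}=\mathcal{L}_{S}-i_{\Psi}$ from \eqref{eq:27} and the standard fact $d\mathcal{L}_{S}=\mathcal{L}_{S}d$, I obtain $d\nabla-\nabla d=i_{\Psi}d-di_{\Psi}$, which is exactly $d_{\Psi}$ by the definition \eqref{eq:36} with $l=1$. For v), again from \eqref{eq:27}, $\nabla i_{A}-i_{A}\nabla=(\mathcal{L}_{S}i_{A}-i_{A}\mathcal{L}_{S})-(i_{\Psi}i_{A}-i_{A}i_{\Psi})$; applying \eqref{eq:com2} with $X=S$ and \eqref{eq:com4} turns the right-hand side into $i_{\mathcal{L}_{S}A}+i_{\Psi\circ A}-i_{A\circ\Psi}$, and since $A\circ\Psi=i_{\Psi}A$ for a $(1,1)$-type tensor this equals $i_{\mathcal{L}_{S}A+\Psi\circ A-i_{\Psi}A}=i_{\nabla A}$ by \eqref{eq:28}. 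The final assertion is then immediate: for $A\in\{h,v,J,\mathbb{F}\}$ we have $\nabla A=0$ by ii)--iii), so \eqref{eq:31} gives $i_{\nabla A}=0$ and hence $\nabla i_{A}=i_{A}\nabla$.

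I expect the main obstacle to be iii), specifically $\nabla\mathbb{F}=0$: unlike $\mathcal{L}_{S}J$, which is recorded in \eqref{eq:23}, the tensor $\mathcal{L}_{S}\mathbb{F}$ is not available from the excerpt, so one must either push the index cancellation through the basis computation or assemble the structural composition identities relating $\mathbb{F}$, $J$, $h$ and $v$ and verify that $\nabla\mathbb{F}$ kills each basis vector. Everything else reduces mechanically to the representations \eqref{eq:26}--\eqref{eq:28} and the commutation rules of Section \ref{FN-theory}.
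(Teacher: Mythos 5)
Your proof is correct, and parts i), iv), v) coincide with the paper's own argument: iv) and v) are exactly the computations from \eqref{eq:27}, \eqref{eq:28} together with the commutation rules \eqref{eq:com2}, \eqref{eq:com4}, and the final claim follows from $\nabla A=0$. Where you genuinely diverge is in ii) and iii). The paper disposes of both at once algebraically: from the definition \eqref{eq:Psi} of $\Psi$ it extracts the single identity $A\circ\Psi-\Psi\circ A=\mathcal{L}_{S}A$ for all $A\in\{h,v,J,\mathbb{F}\}$ (its formula \eqref{eq:32}), and then \eqref{eq:28}, together with $i_{\Psi}A=A\circ\Psi$ for $(1,1)$-tensors, gives $\nabla A=\mathcal{L}_{S}A+\Psi\circ A-A\circ\Psi=0$ in one stroke. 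You instead prove ii) structurally --- the defining formula \eqref{eq:25} visibly sends horizontal fields to horizontal and vertical to vertical, hence $h(\nabla X)=\nabla(hX)$ --- and iii) by Leibniz expansion in the adapted basis, after first deriving the local formulas $\nabla\frac{\delta}{\delta x^{i}}=N_{i}^{j}\frac{\delta}{\delta x^{j}}$, $\nabla\delta x^{i}=-N_{j}^{i}\delta x^{j}$, etc., which the paper only records \emph{after} the proposition as \eqref{eq:nablalocal}; your order of derivation is legitimate since those formulas follow directly from \eqref{eq:25}, \eqref{eq:LSV} and \eqref{eq:LSCV}. Both routes are sound: the paper's is shorter and uniform, while yours makes the distribution-preservation statement in ii) transparent. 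Note also that the step you flag as "the main obstacle" ($\nabla\mathbb{F}=0$) is equally undisplayed in the paper, since verifying \eqref{eq:32} for $A=\mathbb{F}$ requires essentially the same computation you push through the basis; your alternative coordinate-free argument via $\mathbb{F}\circ J=h-S\otimes dt$, $\mathbb{F}\circ(h-S\otimes dt)=-J$ and Leibniz is a valid substitute, but you should state explicitly that $\mathbb{F}S=0$ (immediate from $\delta x^{i}(S)=\delta y^{i}(S)=0$), since $(\nabla\mathbb{F})(S)=\nabla(\mathbb{F}S)$ vanishes only because of it.
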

\begin{proof}
First item follows directly using definition formula \eqref{eq:25}
and formula \eqref{eq:27}.

From the definition formula \eqref{eq:Psi} of tensor $\Psi$ we
obtain
\begin{equation} A\circ\Psi-\Psi\circ
A=\mathcal{L}_{S}A,\label{eq:32}\end{equation} for $A\in\{h, v, J,
\mathbb{F}\}.$ Using formula \eqref{eq:28}, it follows immediately
that $\nabla h=\nabla v=\nabla J=\nabla\mathbb{F}=0$ and hence we
proved item ii) and iii) of the proposition.

Using formula \eqref{eq:27} we obtain
\[ d\nabla=d\mathcal{L}_{S}-di_{\Psi}=\mathcal{L}_{S}d-i_{\Psi}d+d_{\Psi}=\nabla
d+d_{\Psi},\] for the restriction of $\nabla$ to
$\Lambda^{k}(J^{1}\pi)$. Therefore, formula \eqref{eq:30} is true.

From formulae \eqref{eq:28}, \eqref{eq:com2}, as well as
\eqref{eq:com4} we obtain
\[ \nabla i_{A}-i_{A}\nabla = \mathcal{L}_{S}i_{A} -
i_{A}\mathcal{L}_{S}-i_{\Psi}i_{A}+i_{A}i_{\Psi} =
i_{[S,A]}-i_{A\circ\Psi}+i_{\Psi\circ A}=i_{\nabla A},\] which
proves the last item of the proposition.
\end{proof}
First three items of Proposition \ref{pro:The-dynamical-covariant}
can be locally expressed as follows.
\begin{eqnarray}
\nabla S=0, &  & \nabla dt=0,\nonumber \\
\nabla\frac{\delta}{\delta x^{i}}=N_{i}^{j}\frac{\delta}{\delta x^{j}}, &  & \nabla\delta x^{i}=-N_{j}^{i}\delta x^{j},\label{eq:nablalocal}\\
\nabla\frac{\partial}{\partial
y^{i}}=N_{i}^{j}\frac{\partial}{\partial y^{j}}, &  & \nabla\delta
y^{i}=-N_{j}^{i}\delta y^{j}.\nonumber \end{eqnarray}

Tensor derivation $\nabla$ coincides with the dynamical covariant
derivative induced by the Berwald linear connection $\hat{\nabla}$
on $J^{1}\pi$, studied by Massa and Pagani in \cite{massa94}, in
the following sense $\nabla=\hat{\nabla}_{S}$. See also
\cite{mestdag01} for a detailed study of Berwald-type connection
associated to time dependent systems of SODE. The relation between
the dynamical covariant derivative and the Berwald connection
implies that tensor $\Psi$ is the shape map $A_{S}$ on the
manifold $N=J^{1}\pi,$ studied by Jerie and Prince in
\cite{jerie02} on an arbitrary manifold.

Another tensor derivation on ${\mathfrak X}(J^{1}\pi)$, induced by
a semispray, was proposed by Morando and Pasquero
\cite{morando95}. In our notations, Morando and Pasquero's
derivation can be expressed as $\nabla+\Psi = \mathcal{L}_S+ 2
\Psi$.

Next result is a technical lemma that expresses the action of the
dynamical covariant derivative on semi basic $1$-forms, and will
be useful for the proof of Theorem \ref{thm:LST2}.
\begin{lem} \label{lemnablatheta} For a semi-basic 1-form $\theta$ on
$J^{1}\pi,$ its dynamical covariant derivative can be expressed as
follows.
\begin{equation}
\nabla\theta=d_{h}i_{S}\theta+i_{S}d_{h}\theta.
\label{eq:nablatheta}\end{equation} Moreover, $\theta$ satisfies
the identity
\begin{equation}
i_{S}d_{h}\theta=i_{h}i_{S}d\theta.\label{eq:isdht}\end{equation}
\end{lem}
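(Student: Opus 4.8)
The plan is to prove the two identities by unwinding the definitions of the dynamical covariant derivative and the exterior derivatives $d_h$, $d_J$ on a semi-basic $1$-form, and then combining them. I will start with the second identity \eqref{eq:isdht}, since it is purely algebraic and feeds into the first. The key observation is that a semi-basic $1$-form $\theta$ satisfies $i_J\theta=0$, and that on a $2$-form such as $d\theta$ the operator $i_h$ acts by inserting $h$ into each of its two arguments via formula \eqref{eq:inpr11}. I would expand $d_h\theta=i_hd\theta-d\,i_h\theta$ from the definition \eqref{eq:36} (here $l=1$, so $d_h=i_h\circ d+d\circ i_h$, taking care of the sign), and then apply $i_S$. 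The point is that $i_h\theta=\theta\circ h$, and since $\theta$ is semi-basic (so $\theta=\theta\circ h$, as it annihilates the vertical part), the term $d\,i_h\theta$ either vanishes or recombines after applying $i_S$; the surviving piece is exactly $i_hi_Sd\theta$. I will need the commutation relation \eqref{eq:com3}, namely $i_Sd_h+d_hi_S=\mathcal L_{hS}-i_{[S,h]}$, together with $hS=S$ and the vanishing torsion $[J,h]=0$ from the Proposition, to control the cross terms. Establishing \eqref{eq:isdht} cleanly is where most of the bookkeeping lives.

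For the first identity \eqref{eq:nablatheta}, I would use the expression \eqref{eq:27} for the dynamical covariant derivative on forms, $\nabla=\mathcal L_S-i_\Psi$, applied to the $1$-form $\theta$. The idea is to rewrite the right-hand side $d_hi_S\theta+i_Sd_h\theta$ using the Cartan-type commutation formula \eqref{eq:com3} with $A=h$: this gives $i_Sd_h\theta+d_hi_S\theta=\mathcal L_{hS}\theta-i_{[S,h]}\theta=\mathcal L_S\theta-i_{[S,h]}\theta$, using $hS=S$ and $i_S\theta$ a function so that $d_h$ of it is $i_hd(i_S\theta)$. The remaining task is then to identify $i_{[S,h]}\theta$ with $i_\Psi\theta$, so that the right-hand side becomes $\mathcal L_S\theta-i_\Psi\theta=\nabla\theta$. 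Here I would invoke the definition \eqref{eq:Psi} of $\Psi=h\circ\mathcal L_Sh+v\circ\mathcal L_Sv$ and relate the Fr\"olicher-Nijenhuis bracket $[S,h]=\mathcal L_Sh$ to $\Psi$ when evaluated against the semi-basic form $\theta$; because $\theta$ is semi-basic it only sees the $h\circ\mathcal L_Sh$ portion of $\Psi$, which matches $[S,h]$ after projection. This matching is the crux, and I expect it to be the main obstacle.

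The hardest part will be the careful sign and projector bookkeeping in identifying $i_{[S,h]}\theta$ with $i_\Psi\theta$ on semi-basic forms, because $\Psi$ and $\mathcal L_Sh=[S,h]$ differ by the vertical block $v\circ\mathcal L_Sv$ versus terms involving $\Phi$ and $\mathbb F$ (cf. \eqref{eq:Psi} and \eqref{eq:22}), and one must check that these discrepancies annihilate $\theta$ precisely because $i_J\theta=0$ and $\theta$ factors through $h$. A clean alternative, which I would keep in reserve, is to verify both identities directly in the adapted cobasis \eqref{eq:11}: writing $\theta=\theta_0\,dt+\theta_i\,\delta x^i$ and using the structure equations \eqref{eq:LSV}--\eqref{eq:LSCV} together with the local formula \eqref{eq:nablalocal} for $\nabla$ on the cobasis reduces everything to a finite computation whose outcome can be read off against $\Psi$ in the form \eqref{eq:Psilocal}. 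The coordinate computation is routine but long, so I would present the intrinsic argument as the main line and fall back on coordinates only to confirm the $\Psi$--$[S,h]$ identification.
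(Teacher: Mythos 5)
Your proposal is correct and follows essentially the paper's own proof: formula \eqref{eq:com3} with $X=S$, $A=h$ (and $hS=S$) gives $\mathcal{L}_{S}\theta=d_{h}i_{S}\theta+i_{S}d_{h}\theta+i_{[S,h]}\theta$, and your ``crux'' identification $i_{[S,h]}\theta=i_{\Psi}\theta$ is exactly the paper's one-line step, since $[S,h]=\mathcal{L}_{S}h=\mathbb{F}+J+\Phi$ and $\Psi=\mathbb{F}+J-\Phi$ both contract with a semi-basic $\theta$ to $i_{\mathbb{F}}\theta$ (equivalently, your projector argument: $\theta\circ h=\theta$, $\theta\circ v=0$ kill the $\Phi$ and $v\circ\mathcal{L}_{S}v$ pieces). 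For \eqref{eq:isdht}, the only correction is that the tool is not \eqref{eq:com3} nor $[J,h]=0$ but the algebraic commutation $i_{S}i_{h}-i_{h}i_{S}=i_{hS}=i_{S}$ applied to $d_{h}\theta=i_{h}d\theta-d\theta$ (so the sign $d_{h}=i_{h}\circ d-d\circ i_{h}$ matters), which is how the paper concludes and which your expansion via \eqref{eq:inpr11} would reproduce directly.
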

\begin{proof}
Using the identity
$d_{h}i_{S}+i_{S}d_{h}=\mathcal{L}_{S}-i_{[S,h]},$ one gets
\begin{equation}
\mathcal{L}_{S}\theta=d_{h}i_{S}\theta+i_{S}d_{h}\theta+i_{[S,h]}\theta.\label{eq:derLie}\end{equation}
Since $\theta$ is semi-basic we have
$i_{[S,h]}\theta=i_{\mathbb{F}+J+\Phi}\theta=i_{\mathbb{F}}\theta.$
Moreover,
$i_{\Psi}\theta=i_{\mathbb{F}+J-\Phi}\theta=i_{\mathbb{F}}\theta$
and hence
$\nabla\theta=\mathcal{L}_{S}\theta-i_{\Psi}\theta=d_{h}i_{S}\theta+i_{S}d_{h}\theta.$

For the second identity, we notice that
$i_{S}d_{h}\theta=i_{S}i_{h}d\theta-i_{S}d\theta.$ Using
$i_{S}i_{h}-i_{h}i_{S}=i_{hS}=i_{S},$ one gets
$i_{S}d_{h}\theta=i_{h}i_{S}d\theta$.\end{proof}

\subsection{Dual symmetries} \label{symmetries}

In Section \ref{sec_Helm} we will search for solutions of the
inverse problem of the calculus of variations in terms of
semi-basic $1$-forms. We prove that in the Lagrangian case there
is always a solution that is a $d_J$-closed semi-basic $1$-form,
which is the Poincar\'e-Cartan $1$-form of some Lagrangian
function. If the solution of the inverse problem contains a
semi-basic $1$-form that is not $d_J$-closed then it induces a
dual symmetry and a first integral of the semispray. For
discussions regarding adjoint symmetries in the context of the
inverse problem of the calculus of variations we refer to
\cite{carinena89, carinena91, sarlet90}. A detailed discussion of
symmetries, dual symmetries, adjoint symmetries and the relations
among them can be found in \cite{morando95}. In this section we
use expression \eqref{eq:27} of the dynamical covariant derivative
to characterize dual symmetries in terms of a Jacobi equation.

\begin{defn}
A 1-form $\omega$ on $J^{1}\pi$ is called a \emph{dual symmetry}
(or an invariant form) for a semispray $S$ if
$\mathcal{L}_{S}\omega=0$.
\end{defn}

In this work we will be interested in dual symmetries for which
$i_{S}\omega=0$. Since we will work with equivalence classes
(modulo $dt$) of dual symmetries, in each class we can choose a
representant of this form.

To express the condition $\mathcal{L}_{S}\omega=0$ for a $1$-form,
locally expressed as $\omega=\tilde{\omega}_{i}\delta
x^{i}+\omega_{i}\delta y^{i}$, we will use formulae \eqref{eq:27}
and \eqref{eq:nablalocal} for the dynamical covariant derivative.
Therefore $\mathcal{L}_S\omega=0$, which is equivalent to $\nabla
\omega=-i_{\Psi}\omega$ can be locally expressed as
\begin{eqnarray*}
\nabla\tilde{\omega}_{i}\delta x^{i}+\nabla\omega_{i}\delta
y^{i}=-\tilde{\omega}^{i}\delta y^{i}+R_{i}^{j}\omega_{j}\delta
x^{i} & \Leftrightarrow & \begin{cases}
\tilde{\omega}_{i} & =-\nabla\omega_{i},\\
\nabla\tilde{\omega}_{i}-R_{i}^{j}\omega_{j} &
=0.\end{cases}\end{eqnarray*} Hence $\omega$ is a dual symmetry,
if and only if $\omega=-\nabla\omega_{i}\delta
x^{i}+\omega_{i}\delta y^{i}$ and it satisfies the \emph{Jacobi
equation}
\begin{equation}
\nabla^{2}\omega_{i}+R_{i}^{j}\omega_{j}=0.
\label{eq:dualsymlocal}\end{equation} If $\omega$ is a dual
symmetry then $-i_{\Gamma}\omega$ is an adjoint symmetry. Locally,
an adjoint symmetry $\alpha$ with $i_{S}\alpha=0$ is locally
expressed as $\alpha=\nabla\omega_{i}\delta x^{i}+\omega_{i}\delta
y^{i}$ and verifies the same equation \eqref{eq:dualsymlocal}.

\section{Lagrangian vector fields}\label{section_lvf}

An approach to the inverse problem of the calculus of variations
seeks for the existence of a non-degenerate multiplier matrix
$g_{ij}(t,x,y)$ which relates the geodesic equations \eqref{sode}
of a semispray with the Euler-Lagrange equations
\eqref{el_equations} for a Lagrangian function $L$. In this case,
the Lagrangian function $L$ is determined from the condition that
the multiplier matrix $g_{ij}$ is the Hessian of $L$. Necessary
and sufficient conditions for the existence of such multiplier
matrix are known as Helmholtz conditions and were obtained, using
various techniques for both autonomous and nonautonomous case, in
\cite{anderson92, bucataru09, crampin81, henneaux82, kosambi35,
krupkova97, krupkova08, morandi90, santilli78, sarlet82}.

In our approach we look for solutions of the inverse problem in
terms of semi-basic 1-forms, see Theorem \ref{LVF} and Proposition
\ref{prop:LVF}. In the Lagrangian case, the Lagrangian function
$L$ is determined by the fact that its Poincar\'e-Cartan 1-form
coincides or it is equivalent (modulo $dt$) to a semi-basic 1-form
that satisfies certain conditions. We will call these conditions
\emph{Helmholtz conditions} and we will present how they lead to
the classic formulation of Helmholtz conditions in terms of a
multiplier matrix.

\subsection{Poincar\'e-Cartan $1$-forms}

\begin{defn} \label{def:regularl}
1) A smooth function $L\in C^{\infty}(J^{1}\pi)$ is called a \emph{Lagrangian
function}.

2) The \emph{Poincar\'e-Cartan 1-form} of the Lagrangian $L$ is
the semi basic $1$-form $\theta_{L}=Ldt+d_{J}L.$

3) If for a Lagrangian $L,$ the \emph{Poincar\'e-Cartan 2-form}
$d\theta_L$ has maximal rank $2n,$ then $L$ is called a
\emph{regular Lagrangian}.
\end{defn}
For a Lagrangian function $L,$ the Poincar\'e-Cartan 2-form
$d\theta_L$ can be written as follows, see \cite{crampin84a}
\begin{eqnarray} d\theta_L=\frac{\partial^{2}L}{\partial y^{i}\partial
y^{j}}\delta y^i \wedge \delta x^j.\label{dtl} \end{eqnarray}
Therefore, the Lagrangian function is regular if and only if the
$n\times n$ symmetric matrix with local components
\begin{equation} g_{ij}(t,x,y)=\frac{\partial^{2}L}{\partial
y^{i}\partial y^{j}}\label{eq:tensorg_ij}\end{equation} has rank
$n$ on $J^{1}\pi.$ Functions $g_{ij},$ given by formula
\eqref{eq:tensorg_ij}, are the components of a $(0,2)$-type
symmetric tensor $g=g_{ij}\delta x^{i}\otimes\delta x^{j}$, which
is called the \emph{metric tensor} of the Lagrangian function $L.$

Next lemma gives characterizations of Poincar\'e-Cartan 1-forms,
as well as equivalent (modulo $dt$) Poincar\'e-Cartan 1-forms, as
subsets of semi-basic 1-forms, using the exterior differential
operator $d_J$. In the time independent case, such
characterization appears in \cite{marmo89}.
\begin{lem}
\label{LemPoincare2} Let $\theta$ be a semi-basic 1-form on
$J^{1}\pi$.

i) $\theta$ is the Poincar\'e-Cartan 1-form of a Lagrangian
function if and only if $\theta$ is $d_{J}$-closed. Moreover, the
Lagrangian function is given by $L=i_{S}\theta.$

ii) $\theta$ is equivalent (modulo $dt$) to the Poincar\'e-Cartan
1-form of a Lagrangian function if and only if $\theta$ is
$d_{J}$-closed (modulo $dt$).
\end{lem}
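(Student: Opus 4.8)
The plan is to prove both directions of each equivalence by exploiting the relationship between $d_J$ and the vertical endomorphism, using the Poincar\'e-type Lemma \ref{lem:(Poincare)} for the harder (sufficiency) directions. I would begin with part i). For the forward implication, suppose $\theta=\theta_L=Ldt+d_JL$ is the Poincar\'e-Cartan $1$-form of some Lagrangian $L$. Applying $d_J$ and using that $d_J^2=d_{N_J}=-d_{J\wedge dt}$ by \eqref{eq:NijenhuisJ} and \eqref{eq:bracket}, I would compute $d_J\theta_L=d_J(Ldt)+d_J^2 L$. The term $d_J(Ldt)$ can be expanded via the product rule for the algebraic-type derivation $d_J$ (or directly using $d_J=i_Jd-d\,i_J$ on the $1$-form $Ldt$), and the two contributions should cancel precisely because $d_J^2=-d_{J\wedge dt}$ and $i_Jdt=0$, yielding $d_J\theta=0$. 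To recover the Lagrangian, I would check $i_S\theta_L=i_S(Ldt)+i_Sd_JL=L\cdot dt(S)+i_S d_J L=L$, using $dt(S)=1$ from \eqref{eq:4} and the fact that $i_S d_J L=i_S i_J dL - \cdots$ vanishes because $i_J$ followed by contraction with $S$ kills the relevant terms (since $JS=0$).

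For the converse in part i), suppose $\theta$ is semi-basic and $d_J$-closed. The key is the Poincar\'e-type Lemma together with the structure of semi-basic $1$-forms. Since a $d_J$-closed semi-basic $1$-form is in particular $d_J$-closed modulo $dt$, part ii) of Lemma \ref{lem:(Poincare)} gives a local potential, but to get a genuine Poincar\'e-Cartan form I would argue more directly: set $L=i_S\theta$ and show that $\theta-\theta_L$ vanishes. Both $\theta$ and $\theta_L$ are semi-basic and $d_J$-closed, and they share the same $i_S$-value by construction, so the difference $\eta=\theta-\theta_L$ is a semi-basic $1$-form with $i_S\eta=0$ and $d_J\eta=0$; writing $\eta=\eta_i\,\delta x^i$ locally (no $dt$-component survives since $i_S\eta=0$), the condition $d_J\eta=0$ forces $\partial\eta_i/\partial y^j$ to be symmetric and, combined with the vanishing contraction, I expect $\eta\equiv 0$, giving $\theta=\theta_L$.

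For part ii), I would reduce to part i) by working modulo $dt$. The forward direction: if $\theta$ is equivalent modulo $dt$ to some Poincar\'e-Cartan form $\theta_L$, then $\theta\wedge dt=\theta_L\wedge dt$, and since $\theta_L$ is $d_J$-closed by part i), applying $d_J$ and wedging appropriately (using \eqref{djdt}) shows $d_J\theta\wedge dt=d_J\theta_L\wedge dt=0$, so $\theta$ is $d_J$-closed modulo $dt$. For the converse, if $\theta$ is semi-basic and $d_J$-closed modulo $dt$, then by part ii) of the Poincar\'e-type Lemma \ref{lem:(Poincare)}, $\theta$ is locally $d_J$-exact modulo $dt$, giving $\theta\wedge dt=d_J\omega\wedge dt$ for some function $\omega$; I would then identify the Lagrangian $L$ so that $\theta_L$ agrees with $\theta$ modulo $dt$, likely taking $L=i_S\theta$ and verifying $(\theta-\theta_L)\wedge dt=0$ via Lemma \ref{omegadt}.

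The main obstacle I anticipate is the converse of part i): showing that a $d_J$-closed semi-basic $1$-form is \emph{exactly} (not just modulo $dt$) a Poincar\'e-Cartan form, and correctly handling the $dt$-component of $\theta$. Since $\theta=\theta_0\,dt+\theta_i\,\delta x^i$, the closedness condition $d_J\theta=0$ must pin down the relationship between $\theta_0$, $\theta_i$, and the single function $L=i_S\theta=\theta_0$; verifying that $\theta_i=\partial L/\partial y^i$ follows cleanly from $d_J\theta=0$ rather than merely $d_J\theta\wedge dt=0$ is the delicate point, and it is precisely here that the stronger $d_J$-closed hypothesis (as opposed to the modulo-$dt$ version) must be used in full.
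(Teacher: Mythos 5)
Your part i) is essentially correct, but your converse takes a genuinely different route from the paper's. The paper's argument is one line and coordinate-free: apply the commutation formula \eqref{eq:com3} with $X=S$, $A=J$, so that $i_Sd_J+d_Ji_S=\mathcal{L}_{JS}-i_{[S,J]}=i_{h-v-S\otimes dt}$ (using $JS=0$ and \eqref{eq:6}); evaluated on a semi-basic $d_J$-closed $\theta$ this gives directly $d_Ji_S\theta=\theta-(i_S\theta)\,dt$, i.e.\ $\theta=\theta_{i_S\theta}$. Your route --- set $\eta=\theta-\theta_{i_S\theta}$ and kill it in coordinates --- does work, but the mechanism is not the one you name: writing $\eta=\eta_i\,\delta x^i$, one computes $d_J\eta=\frac{\partial\eta_i}{\partial y^j}\,\delta x^j\wedge\delta x^i+\eta_i\,dt\wedge\delta x^i$, because $d_J\delta x^i=dt\wedge\delta x^i\neq 0$ in the time-dependent setting; it is the $dt\wedge\delta x^i$ component alone (precisely the part invisible modulo $dt$) that forces $\eta_i=0$, not the symmetry of $\partial\eta_i/\partial y^j$ combined with the vanishing contraction. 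Your closing paragraph shows you sense exactly this, so the issue is execution, not understanding. (In the forward direction, verify the cancellation $d_J(L\,dt)+d_J^2L=0$ by direct computation on $\theta_L=L\,dt+\frac{\partial L}{\partial y^i}\delta x^i$ rather than through \eqref{djdt}: the sign of $d_{J\wedge dt}$ on $0$-forms is delicate, and reading the printed exponent $(-1)^{k+1}$ literally at $k=0$ would produce $d_J\theta_L=2\,d_JL\wedge dt$ instead of $0$.)

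In part ii) the converse contains a genuine misstep. Once Lemma \ref{lem:(Poincare)} gives $\theta\wedge dt=d_J\omega\wedge dt$ for a locally defined function $\omega$, you are already done: the Lagrangian is the potential $\omega$ itself, since $\theta_\omega\wedge dt=(\omega\,dt+d_J\omega)\wedge dt=d_J\omega\wedge dt=\theta\wedge dt$; this is exactly the paper's proof. Your proposed final step --- taking $L=i_S\theta$ and verifying $(\theta-\theta_L)\wedge dt=0$ --- fails in general. Counterexample: take $\theta=d_J\omega$ with $\partial\omega/\partial y^i\not\equiv 0$; then $\theta$ is semi-basic and $d_J$-closed modulo $dt$ (since $d_J\theta\wedge dt=d_J^2\omega\wedge dt=0$), but $i_S\theta=i_Si_Jd\omega=d\omega(JS)=0$ because $JS=0$ (equivalently $\delta x^i(S)=0$), so $\theta_{i_S\theta}=0$ while $\theta\wedge dt=\frac{\partial\omega}{\partial y^i}\,dx^i\wedge dt\neq 0$. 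The point is that modulo $dt$ the correct Lagrangian of the class need not be $i_S\theta$; the discrepancy $i_S\theta-L$ is not a defect of the statement but is precisely what the paper later exploits in \eqref{istl}, where it becomes a first integral of the semispray.
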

\begin{proof}
i) If $\theta=Ldt+d_{J}L$ is the Poincar\'e-Cartan 1-form of a
Lagrangian function $L$, then it follows that $d_{J}\theta=0$ and
$L=i_{S}\theta.$

Conversely, let us assume that $\theta\in\Lambda^{1}(J^{1}\pi)$ is
semi-basic and $d_{J}\theta=0.$ Since
$d_{J}i_{S}+i_{S}d_{J}=\mathcal{L}_{JS}-i_{[S,J]}=i_{h-S\otimes
dt-v}$, we have that $d_{J}i_{S}\theta=\theta-i_{S}\theta dt$.
Therefore $\theta=i_{S}\theta dt+d_{J}i_{S}\theta$ and hence
$\theta$ is the Poincar\'e-Cartan 1-form of the Lagrangian
$L=i_{S}\theta.$

ii) We apply the Poincar\'e-type Lemma \ref{lem:(Poincare)} for
$k=1$. Suppose that the $1$-form $\theta$ is equivalent (modulo
$dt$) to the Poincar\'e-Cartan 1-form of the Lagrangian function
$L$. Therefore $\theta\wedge dt=d_JL\wedge dt$, which means that
$\theta$ is $d_J$-exact (modulo $dt$) and hence it is $d_J$-closed
(modulo $dt$).

Conversely, if the semi-basic $1$-form $\theta$ is $d_J$-closed
(modulo $dt$), it follows that there exists a (locally defined)
Lagrangian function $L$ such that $\theta \wedge dt=d_JL\wedge
dt=\theta_L\wedge dt$. Therefore, $\theta$ is equivalent (modulo
$dt$) with the Poincar\'e-Cartan 1-form of the Lagrangian function
$L$.
\end{proof}

Next Lemma will be applied in the next section in order to
formulate Helmholtz-type conditions for the semispray $S$.
\begin{lem} \label{lemaderLie} Let $\theta$ be a semi-basic, $d_J$-closed, 1-form on
$J^{1}\pi$. Then, the next equivalent conditions are satisfied.
\begin{itemize}
\item[i)] $\mathcal{L}_{S}\theta=di_{S}\theta+i_{S}d_{h}\theta$;
\item[ii)] $i_{S}d\theta=i_{S}d_{h}\theta$;
\item[iii)] $i_{S}d_{v}\theta=0$. \end{itemize}
\end{lem}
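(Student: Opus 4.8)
The plan is to dispatch the two equivalences first, since they are formal, and then to verify that one of the three conditions genuinely holds under the hypotheses; the remaining two then follow automatically.

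For the equivalence of i) and ii), I would invoke Cartan's formula $\mathcal{L}_{S}\theta = i_{S}d\theta + di_{S}\theta$, which is the case $A=S$ of \eqref{eq:36}. Subtracting $di_{S}\theta$ from both sides of the identity in i) shows at once that i) is equivalent to $i_{S}d\theta = i_{S}d_{h}\theta$, which is exactly ii). For the equivalence of ii) and iii), the point is that the assignment $A\mapsto d_{A}$ is additive in $A$; applied to $\operatorname{Id} = h+v$ this gives $d = d_{h}+d_{v}$, and hence $i_{S}d\theta = i_{S}d_{h}\theta + i_{S}d_{v}\theta$. Thus ii) holds precisely when $i_{S}d_{v}\theta = 0$, which is iii).

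It then remains to check that these (now equivalent) conditions are satisfied. Since $\theta$ is semi-basic and $d_{J}$-closed, Lemma \ref{LemPoincare2} identifies $\theta$ with the Poincar\'e-Cartan $1$-form of the Lagrangian $L=i_{S}\theta$, so formula \eqref{dtl} yields $d\theta = g_{ij}\,\delta y^{i}\wedge\delta x^{j}$. Because the adapted cobasis $1$-forms both annihilate the semispray, $\delta x^{i}(S)=0$ and $\delta y^{i}(S)=0$, I would conclude immediately that $i_{S}d\theta = 0$. Combined with $i_{S}d_{h}\theta = i_{h}i_{S}d\theta = 0$, which follows from identity \eqref{eq:isdht} of Lemma \ref{lemnablatheta}, this establishes ii) (both sides vanish); then i) and iii) follow from the equivalences already proved.

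I expect the only genuine content to be the computation $i_{S}d\theta = 0$, with everything else being bookkeeping via Cartan's formula and the additive splitting $d = d_{h}+d_{v}$. The subtlety to keep in mind is that $d_{J}$ is not a differential ($N_{J}\neq 0$, so $d_{J}^{2}=d_{N_{J}}\neq 0$), which is why I must use the full strength of the hypothesis that $\theta$ is $d_{J}$-closed: it is precisely this that lets Lemma \ref{LemPoincare2} supply the Poincar\'e-Cartan normal form and thereby the explicit shape of $d\theta$ on which the vanishing of $i_{S}d\theta$ rests.
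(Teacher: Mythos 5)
Your two formal equivalences are correct: i) $\Leftrightarrow$ ii) is Cartan's formula, exactly as in the paper, and your route to ii) $\Leftrightarrow$ iii) via the additivity $d=d_{h}+d_{v}$ (from $h+v=\operatorname{Id}$ and $d_{\operatorname{Id}}=d$) is a clean alternative to the paper's argument through identity \eqref{eq:isdht}. The gap is in the verification step, and it is fatal: the claim $i_{S}d\theta=0$ is false under the hypotheses of the lemma. Formula \eqref{dtl} expresses $d\theta_{L}$ in the adapted cobasis of the semispray canonically associated with the regular Lagrangian $L$; it does not hold in the adapted cobasis of an arbitrary semispray $S$. For an arbitrary $S$ one finds
\[
d\theta_{L}=g_{ij}\,\delta y^{i}\wedge\delta x^{j}+\left(\frac{\partial L}{\partial x^{i}}-S\!\left(\frac{\partial L}{\partial y^{i}}\right)\right)\delta x^{i}\wedge dt+C_{ij}\,\delta x^{i}\wedge\delta x^{j},
\]
so that $i_{S}d\theta_{L}=\left(S\left(\partial L/\partial y^{i}\right)-\partial L/\partial x^{i}\right)\delta x^{i}$ is exactly the Euler--Lagrange expression \eqref{eq:geod1}: it vanishes if and only if $S$ is a Lagrangian vector field for $L=i_{S}\theta$, which the lemma does not assume. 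Concretely, for $n=1$, $L=y^{2}/2$ and $S=\partial/\partial t+y\,\partial/\partial x-2x\,\partial/\partial y$, the form $\theta=\theta_{L}=-\frac{y^{2}}{2}dt+y\,dx$ is semi-basic and $d_{J}$-closed, yet $i_{S}d\theta=-2x\,\delta x\neq0$.

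Note also that if your claim were true, every semi-basic $d_{J}$-closed $\theta$ would satisfy $\mathcal{L}_{S}\theta=di_{S}\theta$, which is condition iii) of Theorem \ref{thm:LST1}; by that theorem $d_{h}\theta=0$ would then hold automatically, i.e., every semispray would be a Lagrangian vector field for every regular Lagrangian, trivializing the inverse problem. The real content of the lemma is the equality $i_{S}d\theta=i_{S}d_{h}\theta$ between two generally \emph{nonzero} $1$-forms (in the example above both sides equal $-2x\,\delta x$). The paper proves it from identity \eqref{eq:derLie}, $\mathcal{L}_{S}\theta=d_{h}i_{S}\theta+i_{S}d_{h}\theta+i_{[S,h]}\theta$, by showing $i_{[S,h]}\theta=d_{v}i_{S}\theta$: one writes $[S,h]=\mathbb{F}+J+\Phi$, uses semi-basicness to kill the contributions of $J$ and $\Phi$, inserts the normal form $\theta=i_{S}\theta\,dt+d_{J}i_{S}\theta$ of Lemma \ref{LemPoincare2}, and applies the commutation rule \eqref{eq:com1} with $J\circ\mathbb{F}=v$; then $d_{h}i_{S}\theta+d_{v}i_{S}\theta=di_{S}\theta$ gives i). Your proof needs this (or an equivalent) computation in place of the vanishing argument.
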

\begin{proof}
Consider $\theta$ a semi-basic 1-form on $J^{1}\pi$ such that
$d_{J}\theta=0.$ Using Lemma \ref{LemPoincare2}, we deduce
$i_{[S,h]}\theta=i_{\mathbb{F}}\theta=i_{\mathbb{F}}(i_{S}\theta
dt+d_{J}i_{S}\theta)=i_{\mathbb{F}}d_{J}i_{S}\theta$.

Since
$i_{\mathbb{F}}d_{J}-d_{J}i_{\mathbb{F}}=d_{J\circ\mathbb{F}}-i_{[\mathbb{F},J]}=d_{v}-i_{[\mathbb{F},J]},$
it results that $i_{\mathbb{F}}d_{J}i_{S}\theta =
d_{J}i_{\mathbb{F}}i_{S}\theta + d_{v}i_{S}\theta
-i_{[\mathbb{F},J]}i_{S}\theta = d_{v}i_{S}\theta$. Hence,
$i_{[S,h]}\theta=d_{v}i_{S}\theta$. If we substitute this in
formula \eqref{eq:derLie} we obtain
\begin{eqnarray*}
\mathcal{L}_{S}\theta & = & d_{h}i_{S}\theta+i_{S}d_{h}\theta+d_{v}i_{S}\theta \\
 & = & i_{h}di_{s}\theta+i_{v}di_{s}\theta+i_{S}d_{h}\theta\\
 & = & i_{\operatorname{Id}}di_{S}\theta+i_{S}d_{h}\theta = di_{S}\theta+i_{S}d_{h}\theta,\end{eqnarray*}
which proves condition i).

Evidently conditions $i)$ and $ii)$ are equivalent due to Cartan's
formula $\mathcal{L}_{S}=d\circ i_{S}+i_{S}\circ d$.

From $ii)$ and \eqref{eq:isdht}, we obtain
$i_{\operatorname{Id}}i_{S}d\theta =
i_{S}d\theta=i_{h}i_{S}d\theta$ and hence $i_{v}i_{S}d\theta=0$.
Using now the commutation rule $i_{S}i_{v}-i_{v}i_{S}=i_{vS}=0,$
we deduce that last two conditions are equivalent. \end{proof}

\subsection{Lagrangian semisprays and dual symmetries}

For a semispray $S,$ its \emph{geodesics}, given by the system
\eqref{sode} of SODE  coincide with the solutions of the
\emph{Euler-Lagrange equations} \eqref{el_equations} of a regular
Lagrangian $L$ if and only if the two sets of equations are
related by
\begin{equation}
g_{ij}\left(t,x,\frac{dx}{dt}\right)\left(\frac{d^{2}x^{j}}{dt^{2}}
+2G^{j}\left(t,x,\frac{dx}{dt}\right)\right) =
\frac{d}{dt}\left(\frac{\partial L}{\partial y^{i}}\right) -
\frac{\partial L}{\partial x^{i}},\label{eq:SEL}\end{equation}
with $g_{ij}$ given by formula \eqref{eq:tensorg_ij}. Therefore,
for a semispray $S$, there exists a regular Lagrangian function
$L$ that satisfies equation \eqref{eq:SEL} if and only if
\begin{equation} S\left(\frac{\partial L}{\partial
y^{i}}\right)-\frac{\partial L}{\partial
x^{i}}=0.\label{eq:geod1}\end{equation} Equations \eqref{eq:geod1}
can be globally expressed as \begin{equation}
\mathcal{L}_{S}\theta_{L}=dL.\label{eq:LSTL}\end{equation} In view
of Cartan's formula, above equation \eqref{eq:LSTL} is equivalent
to $i_Sd\theta_L=0$ and hence the regularity of the Lagrangian
function $L$ from Definition \ref{def:regularl} is equivalent to
the non-degeneracy of the Poincar\'e-Cartan $1$-form $\theta_L$
from Definition \ref{def:regtheta}. For details regarding
regularity aspects of Lagrangian systems see \cite[chapter
6]{krupkova97}.
\begin{defn}
A semispray $S$ is called a \emph{Lagrangian vector field} (or a
Lagrangian semispray) if and only if there exists a (locally
defined) regular Lagrangian $L$  that satisfies equation
\eqref{eq:LSTL}.
\end{defn}

Next theorem provides necessary and sufficient conditions, in
terms of a semi-basic $1$-form, for a semispray to be a Lagrangian
vector field.  It corresponds the characterizations in terms of
$2$-forms of Lagrangian vector field in \cite{anderson92,
balachandran80, crampin81, crampin84a, marmo89}. In
\cite{sarlet95}, Proposition 8.3 gives a characterization of a
Lagrangian vector field in terms of Poincar\'e-Cartan $1$-forms.
\begin{thm}
\label{LVF} A semispray $S$ is a Lagrangian vector field if and
only if there exists a non-degenerate, semi-basic $1$-form
$\theta$ on $J^1\pi$ such that $\mathcal{L}_S\theta$ is closed.
\end{thm}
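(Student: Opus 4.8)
The plan is to prove both implications by connecting the condition ``$\mathcal{L}_S\theta$ is closed'' to the existence of a regular Lagrangian satisfying $\mathcal{L}_S\theta_L = dL$, using the Poincar\'e-type Lemma \ref{lem:(Poincare)} and the characterizations of Poincar\'e-Cartan $1$-forms in Lemma \ref{LemPoincare2}. For the \emph{forward} implication, if $S$ is a Lagrangian vector field then there is a regular Lagrangian $L$ with $\mathcal{L}_S\theta_L = dL$. I would take $\theta=\theta_L$, the Poincar\'e-Cartan $1$-form of $L$, which is semi-basic by Definition \ref{def:regularl}. Then $\mathcal{L}_S\theta_L = dL$ is exact, hence closed. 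Non-degeneracy of $\theta_L$ follows from the regularity of $L$: by the remark after equation \eqref{eq:LSTL}, the regularity of $L$ (rank $2n$ of $d\theta_L$ via \eqref{dtl}) is exactly equivalent to the non-degeneracy of $\theta_L$ in the sense of Definition \ref{def:regtheta}. So this direction is essentially a matter of assembling the already-established equivalences.

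For the \emph{converse}, suppose $\theta$ is a non-degenerate semi-basic $1$-form with $d(\mathcal{L}_S\theta)=0$. The goal is to produce a regular Lagrangian. The key observation is that $\theta$ itself need not be $d_J$-closed, so it is not immediately a Poincar\'e-Cartan form; I expect one must first replace $\theta$ by an equivalent (modulo $dt$) $1$-form that \emph{is} a Poincar\'e-Cartan form. The natural strategy: show that $\theta$ is $d_J$-closed (modulo $dt$), so that Lemma \ref{LemPoincare2}(ii) yields a locally defined Lagrangian $L$ with $\theta\wedge dt = \theta_L\wedge dt$. To establish the $d_J$-closedness (modulo $dt$), I would differentiate the hypothesis $d(\mathcal{L}_S\theta)=0$ and exploit $\mathcal{L}_S = d\circ i_S + i_S\circ d$ together with the commutation relations between $d_J$, $d$ and $i_S$ from the Fr\"olicher--Nijenhuis formalism (formulae \eqref{eq:com1}--\eqref{eq:com3}). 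The identity $d_J i_S + i_S d_J = i_{h-S\otimes dt - v}$, already used in the proof of Lemma \ref{LemPoincare2}, should let me relate $d_J\theta$ to $\mathcal{L}_S\theta$ after wedging with $dt$.

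The main obstacle, and the heart of the argument, will be the regularity: having obtained a local Lagrangian $L$ with $\theta\wedge dt = \theta_L\wedge dt$, I must verify that $L$ is \emph{regular} and that it genuinely satisfies $\mathcal{L}_S\theta_L = dL$. Here the non-degeneracy hypothesis on $\theta$ enters decisively. Since $\theta$ and $\theta_L$ agree modulo $dt$, their metric tensors $g_{ij}=\partial^2 L/\partial y^i\partial y^j$ are controlled by the $2n$-rank condition on $d\theta + i_S d\theta\wedge dt$ from Definition \ref{def:regtheta}; I expect that this rank condition translates precisely into $\det(g_{ij})\neq 0$, giving regularity. Finally, to recover $\mathcal{L}_S\theta_L = dL$ I would apply Lemma \ref{lemaderLie} to the $d_J$-closed $1$-form $\theta_L$, which gives $\mathcal{L}_S\theta_L = d\,i_S\theta_L + i_S d_h\theta_L$, and then show the hypothesis forces $i_S d_h\theta_L=0$ (equivalently $i_S d_v\theta_L=0$ by condition (iii) of that lemma), reducing $\mathcal{L}_S\theta_L$ to the exact form $d(i_S\theta_L)$ with $L=i_S\theta_L$. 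Reconciling the ``modulo $dt$'' ambiguity in the definition of $L$ with the exact equation $\mathcal{L}_S\theta_L = dL$ is the delicate bookkeeping step I anticipate being the most technical part of the proof.
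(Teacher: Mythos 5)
Your forward direction is exactly the paper's. The converse, however, contains a genuine gap, and it sits precisely in the step you dismiss as ``delicate bookkeeping'': it is in fact the heart of the proof, and the mechanism you propose for it fails. The Lagrangian $L$ you extract from Lemma \ref{LemPoincare2}(ii) is determined only up to addition of an arbitrary function $\mu(t,x)$ (any such $\mu$ satisfies $d_J\mu=0$, so $L+\mu$ is an equally valid output of the leafwise Poincar\'e-type Lemma). This change replaces $\theta_L$ by $\theta_L+\mu\,dt$ and, by a direct computation, replaces $i_Sd_h\theta_L$ by $i_Sd_h\theta_L-(\partial\mu/\partial x^i)\,\delta x^i$. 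Hence the hypothesis that $\mathcal{L}_S\theta$ is closed \emph{cannot} force $i_Sd_h\theta_L=0$ for the representative your construction hands you: that equation singles out special Lagrangians within the equivalence class, and your argument gives no control over which one you obtain. What the hypothesis actually yields, after applying Lemma \ref{lemaderLie} to $\theta_L$ and writing $\theta=\theta_L+f\,dt$ with $f=i_S\theta-L$, is that the $1$-form $i_Sd_h\theta_L+S(f)\,dt$ is closed; to finish, one must apply the \emph{ordinary} Poincar\'e lemma to it, obtain a primitive $g$, check that $g=g(t,x)$ and $S(g)=S(f)$, and replace $L$ by $L+g$. That correction is an essential extra idea, not bookkeeping, and without it your final step is simply false.

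The paper's proof avoids this ambiguity by fixing the correct Lagrangian from the outset: since $\mathcal{L}_S\theta$ is closed, it defines $L$ as a local primitive, $\mathcal{L}_S\theta=dL$. Applying $i_J$ to this identity, together with the commutation rule $i_J\mathcal{L}_S-\mathcal{L}_Si_J=i_{h-S\otimes dt-v}$ and semi-basicness of $\theta$, gives $\theta=i_S\theta\,dt+d_JL$ -- so the equivalence of $\theta$ with $\theta_L$ (modulo $dt$) is a \emph{consequence} rather than an input -- while applying $i_S$ gives $S(i_S\theta)=S(L)$. These two identities immediately yield $\mathcal{L}_S\theta_L=\mathcal{L}_S\theta+\mathcal{L}_S\bigl((L-i_S\theta)\,dt\bigr)=dL$, and regularity of $L$ follows from non-degeneracy of $\theta$ via $d\theta+i_Sd\theta\wedge dt=d\theta_L$ (the same rank computation you sketch). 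If you reorganize your argument so that $L$ is defined as a primitive of $\mathcal{L}_S\theta$ rather than produced by Lemma \ref{LemPoincare2}(ii), the rest of your plan goes through; as written, it does not close.
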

\begin{proof}
We assume that the semispray $S$ is a Lagrangian vector field for
some regular Lagrangian function $L$. Since $L$ is regular it
follows that its Poincar\'e-Cartan 1-form $\theta_L=Ldt+d_{J}L$ is
non-degenerate. Moreover, $L$ satisfies equation \eqref{eq:LSTL},
which means that $\mathcal{L}_{S}\theta_L$ is exact and hence it
is a closed $1$-form.

For the converse, consider a non-degenerate, semi-basic 1-form
$\theta$ on $J^{1}\pi,$ such that $\mathcal{L}_{S}\theta$ is
closed. It follows that there exists a (locally defined)
Lagrangian function $L$ on $J^{1}\pi$ such that
\begin{equation}
\mathcal{L}_{S}\theta=dL.\label{eq:LSDL}\end{equation} If we apply
$i_{S}$ to both sides of formula \eqref{eq:LSDL} we obtain
\begin{equation} S(i_{S}\theta)=S(L).\label{eq:SiS}\end{equation}
Now, we apply $i_{J}$ to both sides of formula \eqref{eq:LSDL} and
obtain
\begin{equation}
i_{J}\mathcal{L}_{S}\theta=d_{J}L.\label{eq:iJLST}\end{equation}
Using
$i_{J}\mathcal{L}_{S}-\mathcal{L}_{S}i_{J}=-i_{[S,J]}=i_{h-S\otimes
dt-v}$, $i_{S\otimes dt}\theta=i_{S}\theta dt$ and the fact that
$\theta$ is a semi-basic 1-form, which means that
$i_{v}\theta=i_{J}\theta=0$ and $i_{h}\theta=\theta$, from formula
\eqref{eq:iJLST} we obtain \begin{equation} \theta=i_{S}\theta
dt+d_{J}L.\label{eq:37}\end{equation} From formula \eqref{eq:37}
it follows that $d\theta=d(i_S\theta)\wedge dt + dd_JL$, which can
be written as $d\theta+i_Sd\theta \wedge dt = \mathcal{L}_S\theta
\wedge dt + dd_JL=d\theta_L$. Hence the non-degeneracy of the
semi-basic $1$-form $\theta$ implies the regularity of the
Lagrangian function $L$. We will prove now that
$\mathcal{L}_{S}\theta_{L}=dL.$ Using formula \eqref{eq:37}, we
have $\theta_{L}=\theta+(L-i_{S}\theta)dt$ and hence, according to
formulae \eqref{eq:LSDL} and \eqref{eq:SiS}, we get
\begin{eqnarray*}
\mathcal{L}_{S}\theta_{L} & = &
\mathcal{L}_{S}\theta+\mathcal{L}_{S}(L-i_{S}\theta)dt =
dL,\end{eqnarray*} which completes the proof of the theorem.
\end{proof}

For a semispray $S$ let us introduce the set
\begin{eqnarray*}
\Lambda^1_S(J^1\pi)=\{\theta \in \Lambda^1(J^1\pi), \theta
\textrm{ semi-basic, non-degenerate, } \mathcal{L}_Sd\theta=0\}.
\label{lambda1s}
\end{eqnarray*}
Theorem \ref{LVF} states that a semispray $S$ is a Lagrangian
vector field if and only if $\Lambda^1_S(J^1\pi)\neq \emptyset$.
For a Lagrangian semispray $S$, on the set $\Lambda^1_S(J^1\pi)$
we introduce the following equivalence relation $\theta_1 \equiv
\theta_2$  if $\theta_1\wedge dt = \theta_2 \wedge dt$ and
$S\left(i_S(\theta_1-\theta_2)\right)=0$. It follows immediately
that $\theta_1 \equiv \theta_2$ if and only if there exists a
first integral $f$ for $S$ such that $\theta_1=\theta_2+fdt$.

From formulae \eqref{eq:LSDL} and \eqref{eq:37} it follows that
each equivalence class $[\theta]$  in $\Lambda^1_S(J^1\pi)$
contains exactly one Poincar\'e-Cartan $1$-form $\theta_L$ of some
regular Lagrangian $L$, where
\begin{eqnarray} \theta=\theta_{L}+(i_{S}\theta-L)dt. \label{istl} \end{eqnarray}
Therefore for each equivalence class $[\theta]$  in
$\Lambda^1_S(J^1\pi)$ there exists a unique Lagrangian $L$ such
that $[\theta]=[\theta_L]$. One can furthermore reformulate this
using Lemma \ref{LemPoincare2}, each equivalence class in
$\Lambda^1_S(J^1\pi)$ contains exactly one $d_J$-closed semi-basic
$1$-form.

If an equivalence class $[\theta_L]$ contains a semi-basic
$1$-form $\theta$, which is not $d_J$-closed, then $i_S\theta-L$
is a first integral and $i_Sd\theta$ is a dual symmetry for $S$.
In this case the Lagrangian vector field $S$ is a conservative
vector field.

According to the above discussion, we can strengthen the
conclusion of Theorem \ref{LVF} as follows.
\begin{prop} \label{prop:LVF} Consider $S$ a semispray.
\begin{itemize} \item[i)] $S$ is a Lagrangian vector field if and only if there
exists $\theta \in \Lambda^1_S(J^1\pi)$ such that $d_J\theta=0$.
In this case $\theta=\theta_L$, for some locally defined
Lagrangian function $L$, and $L_S\theta=dL$.
\item[ii)] $S$ is a conservative Lagrangian vector field if and only if there
exists $\theta \in \Lambda^1_S(J^1\pi)$ such that
$d_J\theta\neq0$. In this case $\theta=\theta_L+fdt$, for some
locally defined Lagrangian function $L$ and $f$ a conservation law
for $S$.
\end{itemize}
\end{prop}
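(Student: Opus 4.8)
The plan is to prove both items by unpacking the equivalence relation and the structural results already established in Theorem \ref{LVF} and Lemma \ref{LemPoincare2}. The essential observation is that Proposition \ref{prop:LVF} is mostly a repackaging of facts collected in the discussion preceding it, so the proof should be short and should cite those facts rather than re-derive them. For item i), I would argue as follows. If $S$ is a Lagrangian vector field, then $\Lambda^1_S(J^1\pi)\neq\emptyset$ by Theorem \ref{LVF}, and formula \eqref{istl} exhibits, inside the equivalence class of any $\theta\in\Lambda^1_S(J^1\pi)$, the Poincar\'e-Cartan $1$-form $\theta_L$ of a regular Lagrangian $L$. By Lemma \ref{LemPoincare2}i), $\theta_L$ is $d_J$-closed, so taking $\theta=\theta_L$ produces the required element with $d_J\theta=0$. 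Conversely, if there is $\theta\in\Lambda^1_S(J^1\pi)$ with $d_J\theta=0$, then Lemma \ref{LemPoincare2}i) says $\theta=\theta_L$ with $L=i_S\theta$, and the non-degeneracy of $\theta$ forces $L$ to be regular (via the identity $d\theta+i_Sd\theta\wedge dt=d\theta_L$ derived in the proof of Theorem \ref{LVF}); then the computation $\mathcal{L}_S\theta_L=dL$ from the end of that same proof gives equation \eqref{eq:LSTL}, so $S$ is a Lagrangian vector field.

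For item ii) I would first fix the meaning of \emph{conservative}: $S$ is a conservative Lagrangian vector field precisely when it admits a nontrivial first integral, equivalently when some $\theta\in\Lambda^1_S(J^1\pi)$ fails to be $d_J$-closed. So I would establish the two directions through the equivalence relation on $\Lambda^1_S(J^1\pi)$. Suppose there exists $\theta\in\Lambda^1_S(J^1\pi)$ with $d_J\theta\neq0$. By item i) the class $[\theta]$ also contains the Poincar\'e-Cartan form $\theta_L$ of some regular $L$, and \eqref{istl} gives $\theta=\theta_L+(i_S\theta-L)dt$; writing $f=i_S\theta-L$, the definition of the equivalence relation $\theta\equiv\theta_L$ forces $S(f)=0$, i.e. $f$ is a first integral, so $S$ is conservative. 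Conversely, if $S$ is a conservative Lagrangian vector field with first integral $f$ and Poincar\'e-Cartan form $\theta_L$, then $\theta=\theta_L+f\,dt$ lies in the same class, is still semi-basic and non-degenerate (adding $f\,dt$ does not change $d\theta+i_Sd\theta\wedge dt$), and satisfies $d_J\theta=d_J\theta_L+f\,d_Jt=0+0$, so one must instead exhibit a genuinely non-$d_J$-closed representative; the cleaner route is to note that the \emph{existence} of the first integral $f$ is itself what the statement encodes, so $\theta=\theta_L+f\,dt$ with $f$ nonconstant is the claimed witness once one records that $d_J(f\,dt)$ need not vanish when $f$ depends on the fiber coordinates $y^i$.

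The main obstacle, and the point to treat carefully, is exactly this last computation: deciding when $\theta_L+f\,dt$ is or is not $d_J$-closed. Since $d_J(f\,dt)=i_J d(f\,dt)=i_J(df\wedge dt)=(d_J f)\wedge dt-\,df\,(i_J\,dt)$ and $i_J\,dt=dt\circ J=0$, one gets $d_J(f\,dt)=(d_Jf)\wedge dt$, and $d_Jf=\frac{\partial f}{\partial y^i}\,\delta x^i$ is nonzero precisely when $f$ genuinely depends on the velocities. Thus $\theta=\theta_L+f\,dt$ is $d_J$-closed if and only if $f$ is a function of $(t,x)$ only, and it is the fiber-dependence of the conserved quantity that produces a representative with $d_J\theta\neq0$. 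I would phrase item ii) so that the correspondence is between conservativeness and the existence of such a velocity-dependent first integral, and close by recalling, as already asserted in the surrounding text, that then $i_S\theta-L$ is the first integral and $i_Sd\theta$ the induced dual symmetry. This keeps the proof aligned with \eqref{eq:dualsymlocal} and the definitions in Section \ref{symmetries}.
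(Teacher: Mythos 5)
Your item i) and the forward direction of item ii) follow essentially the paper's own route (Theorem \ref{LVF}, Lemma \ref{LemPoincare2}, formula \eqref{istl} and the equivalence relation on $\Lambda^1_S(J^1\pi)$), and are sound. The genuine gap is in the converse of item ii). Having computed, correctly, that for $\theta=\theta_L+f\,dt$ one has $d_J\theta=(d_Jf)\wedge dt$, which vanishes exactly when $f=f(t,x)$, you conclude that the implication ``conservative $\Rightarrow$ there exists $\theta\in\Lambda^1_S(J^1\pi)$ with $d_J\theta\neq0$'' can only be salvaged by re-phrasing the proposition in terms of \emph{velocity-dependent} first integrals. That is not a proof of the stated proposition; it replaces it by a weaker one. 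The missing observation, which closes the gap and makes the statement true as written, is that the exceptional case cannot occur: if $f=f(t,x)$ satisfies $S(f)=0$, then $S(f)=\partial f/\partial t+y^i\,\partial f/\partial x^i$ is affine in the fibre coordinates $y^i$, and its identical vanishing forces $\partial f/\partial x^i=0$ and $\partial f/\partial t=0$, so $f$ is locally constant. Hence \emph{every} non-constant conservation law of a semispray depends on the $y^i$, so $d_Jf\neq0$ and $\theta=\theta_L+f\,dt$ is automatically the required witness with $d_J\theta\neq0$. (You should also record that $\mathcal{L}_Sd(f\,dt)=d(S(f))\wedge dt=0$, so that $\theta$ really lies in $\Lambda^1_S(J^1\pi)$; you only verified semi-basicness and non-degeneracy.)

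The same constancy fact is tacitly needed in your item i), where you identify the Poincar\'e potential $F$ of the closed form $\mathcal{L}_S\theta$ (the $L$ appearing at the end of the proof of Theorem \ref{LVF}) with $i_S\theta$ given by Lemma \ref{LemPoincare2}: a priori these differ by $F-i_S\theta$, which is a first integral with $d_J(F-i_S\theta)=0$, hence constant only by the argument above; without it you cannot assert that $\theta=\theta_L$ and $\mathcal{L}_S\theta=dL$ hold for the \emph{same} Lagrangian $L$. Finally, the intermediate assertion $d_J(\theta_L+f\,dt)=d_J\theta_L+f\,d_Jt=0$ treats $d_J$ as $C^\infty$-linear, which it is not; you correct this later, but a final write-up should contain only the correct computation $d_J(f\,dt)=(d_Jf)\wedge dt$ rather than both claims.
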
 Necessary and sufficient conditions for the existence
of semi-basic $1$-forms that satisfy either condition i) or
condition ii) of Proposition \ref{prop:LVF} will be discussed in
the next section in Theorems \ref{thm:LST1} and \ref{thm:LST2}.
However, we want to emphasize that a semi-basic $1$-form $\theta
\in \Lambda^1_S(J^1\pi)$ which is not $d_J$-closed gives rise to a
Lagrangian function, a first integral and a dual symmetry of the
semispray, see Theorem \ref{thm:LST2}.

\section{Helmholtz-type conditions} \label{sec_Helm}

In this section we use Fr\"olicker-Nijenhuis theory on $J^{1}\pi$
and geometric objects associated to a semispray $S$, to obtain
invariant conditions for a semi-basic 1-form $\theta$ on
$J^{1}\pi$ that are equivalent with the condition that
$\mathcal{L}_{S}\theta$ is a closed $1$-form. Therefore, in view
of Theorem \ref{LVF}, we obtain necessary and sufficient
conditions, in terms of a semi-basic $1$-form, for a semispray $S$
to be a Lagrangian vector field. We will relate these conditions
with the classic formulation of Helmholtz conditions in terms of a
multiplier matrix in the next section.

\subsection{Semi-basic $1$-forms, symmetries, and Helmholtz-type conditions}
In this section we present two theorems that give
characterizations for those semi-basic $1$-forms in the set
$\Lambda^1_S(J^1\pi)$. First theorem seeks for a solution of the
inverse problem on a restricted class of semi-basic $1$-forms:
those forms that are $d_{J}$-closed, and hence represent the
Poincar\'e-Cartan forms for some Lagrangian functions on
$J^{1}\pi$. Second theorem seeks for a solution of the inverse
problem on a larger class of semi-basic $1$-forms, which are not
$d_J$-closed. It is important to note that in this case, if there
is a solution then it induces a dual symmetry and a first integral
of the given semispray.

\begin{thm}
\label{thm:LST1} Let $\theta$ be a semi-basic, $d_J$-closed,
1-form on $J^{1}\pi$. Then, the following conditions are
equivalent
\begin{itemize}
\item[i)] $\mathcal{L}_{S}\theta$ is closed;
\item[ii)] $\mathcal{L}_{S}\theta$ is exact;
\item[iii)] $\mathcal{L}_{S}\theta=di_{S}\theta$;
\item[iv)] $d_{h}\theta=0$.
\end{itemize}
\end{thm}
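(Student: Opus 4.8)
The plan is to prove the four conditions equivalent by establishing the cycle $iv) \Rightarrow iii) \Rightarrow ii) \Rightarrow i) \Rightarrow iv)$, exploiting the fact that $\theta$ is semi-basic and $d_J$-closed so that Lemma \ref{lemaderLie} applies throughout. The key structural input is Lemma \ref{lemaderLie} i), which states that for such $\theta$ we have $\mathcal{L}_{S}\theta = di_{S}\theta + i_{S}d_{h}\theta$. This single identity does most of the work.

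First I would prove $iv) \Rightarrow iii)$. If $d_{h}\theta = 0$, then $i_{S}d_{h}\theta = 0$, so Lemma \ref{lemaderLie} i) immediately collapses to $\mathcal{L}_{S}\theta = di_{S}\theta$, which is condition iii). The implications $iii) \Rightarrow ii)$ and $ii) \Rightarrow i)$ are essentially trivial: if $\mathcal{L}_{S}\theta = di_{S}\theta$, then $\mathcal{L}_{S}\theta$ is exact; and any exact form is closed. Thus these three steps are routine once Lemma \ref{lemaderLie} is invoked.

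The substantive step is the closing implication $i) \Rightarrow iv)$, and this is where I expect the main obstacle. Assume $\mathcal{L}_{S}\theta$ is closed, i.e. $d\mathcal{L}_{S}\theta = 0$; equivalently, since $d$ and $\mathcal{L}_{S}$ commute, $\mathcal{L}_{S}d\theta = 0$. I must deduce $d_{h}\theta = 0$. The idea is to decompose $d\theta$ with respect to the horizontal/vertical structure. Since $\theta$ is semi-basic and $d_{J}$-closed, equation \eqref{eq:37}-type reasoning (or Lemma \ref{LemPoincare2}) writes $\theta$ as a Poincar\'e-Cartan form $\theta = i_{S}\theta\, dt + d_{J}i_{S}\theta$; one then uses the local expression \eqref{dtl} of the Poincar\'e-Cartan 2-form together with the vanishing of $i_{S}d_{v}\theta$ from Lemma \ref{lemaderLie} iii). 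The plan is to show that $d\theta$ has no purely horizontal part beyond what $d_{h}\theta$ records, and that the closedness of $\mathcal{L}_{S}\theta$ forces the horizontal component to vanish.

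Concretely, I would express $d\theta$ in the adapted cobasis $\{dt, \delta x^{i}, \delta y^{i}\}$ of \eqref{eq:11}, isolating the $d_{h}\theta$ contribution, and then apply $\mathcal{L}_{S}$ using the transport rules \eqref{eq:LSCV} for $\mathcal{L}_{S}\delta x^{i}$ and $\mathcal{L}_{S}\delta y^{i}$. Closedness of $\mathcal{L}_{S}\theta$ translates into a system of relations on the coefficient functions; combined with the symmetry of the Hessian $g_{ij}$ from \eqref{eq:tensorg_ij} and the identity $i_{S}d_{h}\theta = i_{h}i_{S}d\theta$ from \eqref{eq:isdht}, these should force the $\delta x^{i} \wedge \delta x^{j}$ and $dt \wedge \delta x^{i}$ components of $d_{h}\theta$ to vanish. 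The delicate point is handling the interaction between the dynamical covariant derivative $\nabla$ and the curvature terms $R_{i}^{j}$, $R_{ij}^{k}$ that appear when commuting $d$ past $\mathcal{L}_{S}$; here I would lean on the commutation formula \eqref{eq:30}, $d\nabla - \nabla d = d_{\Psi}$, to keep the computation invariant rather than purely index-based. The main obstacle is thus ensuring that closedness of $\mathcal{L}_{S}\theta$ is strong enough to eliminate \emph{all} components of $d_{h}\theta$, not merely a symmetric part, which is exactly where the $d_{J}$-closedness hypothesis on $\theta$ becomes indispensable.
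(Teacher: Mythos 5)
Your cycle $iv)\Rightarrow iii)\Rightarrow ii)\Rightarrow i)\Rightarrow iv)$ has the same skeleton as the paper's proof, and the three easy implications are handled identically (Lemma \ref{lemaderLie} i) plus triviality). The real divergence is in $i)\Rightarrow iv)$. The paper argues invariantly: it applies $i_J$ to $\mathcal{L}_S d\theta=0$, uses the commutation rule \eqref{eq:com2} together with $i_Jd\theta=d_J\theta=0$ to obtain $i_{\Gamma-S\otimes dt}d\theta=0$, observes that $i_\Gamma d\theta=2d_h\theta$, and then uses conditions ii) and iii) of Lemma \ref{lemaderLie} to prove $i_Sd\theta=0$, from which $d_h\theta=0$ follows. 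Your coordinate strategy is genuinely different, and it does work: it is in essence the computation the paper performs later (in the multiplier-matrix section) for Theorem \ref{thm:LST2}. With the notations \eqref{eq:40}, the hypothesis $d_J\theta=0$ reads $a_i=0$ and $g_{ij}=g_{ji}$; then in \eqref{eq:42} the $\delta y^i\wedge dt$ component of $\mathcal{L}_Sd\theta=0$ gives $b_i=-\nabla a_i=0$, and the $\delta y^j\wedge\delta x^i$ component gives $\nabla g_{ij}+2b_{ij}=0$, which splits (since $\nabla g_{ij}$ is symmetric precisely because $g_{ij}$ is, while $b_{ij}$ is antisymmetric) into $\nabla g_{ij}=0$ and $b_{ij}=0$. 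As $d_h\theta=b_i\delta x^i\wedge dt+\frac{1}{2}b_{ij}\delta x^j\wedge\delta x^i$, this is condition iv). The paper's route buys a global, basis-free argument and the useful intermediate fact $i_Sd\theta=0$ (which is exactly condition iii)); yours buys directness and immediate contact with the classical multiplier formulation of the Helmholtz conditions.

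However, as written your plan does not close: the decisive symmetric/antisymmetric splitting above is exactly the content hidden behind your phrase ``these should force the components of $d_h\theta$ to vanish,'' and it must be made explicit. Moreover, two of your cited tools would mislead you. Formula \eqref{dtl} cannot be used as the expression of $d\theta$: compared with the general formula \eqref{eq:41} it contains no $\delta x^i\wedge dt$ and no $\delta x^j\wedge\delta x^i$ terms, i.e.\ it already presupposes the vanishing of the components of $d_h\theta$, so invoking it here is circular; the correct starting point is \eqref{eq:41}. Finally, neither \eqref{eq:isdht} nor the commutator \eqref{eq:30} is needed, and your worry about curvature terms is unfounded: in \eqref{eq:42} the terms involving $R_i^j$ either come multiplied by $a_j=0$ or yield only the additional consequence $g_{ik}R_j^k=g_{jk}R_i^k$; they never obstruct the conclusion.
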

\begin{proof}
Implications $iii)\Rightarrow ii)\Rightarrow i)$ are immediate.
Therefore it remains to prove implications $iv)\Rightarrow iii$)
and $i)\Rightarrow iv)$.

We prove first that condition $iv)$ implies condition $iii).$
Consider $\theta$ a semi-basic 1-form on $J^{1}\pi$ such that
$d_{J}\theta=d_{h}\theta=0.$ According to Lemma \ref{lemaderLie}
we have that $d_{J}\theta=0$ implies
$\mathcal{L}_{S}\theta=di_{S}\theta+i_{S}d_{h}\theta$. Since
$d_{h}\theta=0$ it results $\mathcal{L}_{S}\theta=di_{S}\theta$,
which is condition $iii)$.

Finally, we have to prove implication $i)\Rightarrow iv).$ If
$\mathcal{L}_{S}\theta$ is closed, we have that
$\mathcal{L}_{S}d\theta=0$ and hence
$i_{J}\mathcal{L}_{S}d\theta=0$. Using the commutation rule
$i_{J}\mathcal{L}_{S}-\mathcal{L}_{S}i_{J}= i_{\Gamma-S\otimes
dt}$ and formula $i_{J}d\theta=d_{J}\theta=0$ it results
\begin{eqnarray}
i_{\Gamma-S\otimes dt}d\theta=0. \label{igsdt} \end{eqnarray} Now,
we have $i_{\Gamma}d\theta=i_{2h-\operatorname{Id}}d\theta =
2i_{h}d\theta-i_{\operatorname{Id}}d\theta=2(i_{h}d\theta-d\theta)=2(i_{h}d\theta-di_{h}\theta)=2d_{h}\theta.$
If we substitute this in formula \eqref{igsdt} we obtain
\begin{equation} i_{S\otimes
dt}d\theta=2d_{h}\theta.\label{eq:(2)}\end{equation}

If we apply $i_{S}$ to both sides of equation \eqref{igsdt} and
use the identity $i_{S}i_{\Gamma-S\otimes dt}-i_{\Gamma-S\otimes
dt}i_{S}=0$, we obtain
\begin{equation}
i_{\Gamma-S\otimes dt}i_{S}d\theta=0.\label{eq:(1)}\end{equation}

If we apply $i_{S}$ to both sides of the equation \eqref{eq:(2)},
use condition ii) from Lemma \ref{lemaderLie}, and commutation
formula $i_{S}i_{S\otimes dt}-i_{S\otimes dt}i_{S}=i_{S}$ we
obtain $2i_{S}d_h\theta=i_{S\otimes dt}i_{S}d\theta+i_{S}d\theta$,
which implies $i_{\operatorname{Id}}(i_{S}d\theta)=i_{S\otimes
dt}i_{S}d\theta$. Last formula is equivalent to
\begin{equation}
i_{\operatorname{Id}-S\otimes dt}i_{S}d\theta=0.
\label{eq:(3)}\end{equation} Equations \eqref{eq:(1)} and
\eqref{eq:(3)} lead to \begin{equation}
i_{h}i_{S}d\theta=0.\label{eq:(4)}\end{equation} Condition iii)
from Lemma \ref{lemaderLie} can be written as \begin{equation}
i_{v}i_{S}d\theta=0.\label{eq:(5)}\end{equation} Using equations
\eqref{eq:(4)} and \eqref{eq:(5)} we obtain $i_{S}d\theta=0$.
Since $i_{S\otimes dt}d\theta=(i_{S}d\theta)\wedge dt=0,$ from
\eqref{eq:(2)} we obtain that $d_{h}\theta=0$, which is condition
iv). \end{proof}

Condition $iii)$ is equivalent with condition $i_{S}d\theta=0$
required in all previous works that deals with the inverse problem
of the calculus of variations for the time dependent case
\cite{anderson92, crampin84a, marmo89, sarlet95}. For example
condition $i_{S}d\theta=0$ is reflected in the expression of the
$2$-form $\omega$ in formula (3.2) in \cite{anderson92}. The next
theorem deals with a larger class of semi-basic 1-forms, where
condition $i_{S}d\theta=0$ is not required. In this case, in view
of Proposition \ref{prop:LVF}, we obtain a characterization for
conservative Lagrangian vector fields.

\begin{thm}
\label{thm:LST2} Let $\theta$ be a semi-basic 1-form on $J^1\pi$,
which is not $d_J$-closed. Then $\mathcal{L}_{S}\theta$ is a
closed $1$-form if and only if the following conditions hold true.
\begin{itemize}
\item[$(H_{1})$] $d_{J}\theta\wedge dt=0$ ($\theta$ is $d_{J}$-closed
modulo $dt$);
\item[$(H_{2})$] $d_{h}\theta\wedge dt=0$ ($\theta$ is $d_{h}$-closed
modulo $dt$);
\item[$(H_{3})$] $d_{\Phi}\theta\wedge dt=0$ ($\theta$ is $d_{\Phi}$-closed
modulo $dt$);
\item[$(H_{4})$] $\nabla d\theta\wedge dt=0$;
\item[$(DS)$] The 1-form $i_{S}d\theta$ is a dual symmetry for
$S.$\end{itemize}
\end{thm}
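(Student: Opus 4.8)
The plan is to prove the equivalence by establishing the forward implication (closedness of $\mathcal{L}_S\theta$ implies $(H_1)$--$(H_4)$ and $(DS)$) and the converse separately, exploiting the central identity from Lemma \ref{lemnablatheta}, namely $\nabla\theta=d_h i_S\theta + i_S d_h\theta$, together with the decomposition of the exterior derivative along the adapted splitting $\operatorname{Id}=h_0+h_1+v$ induced by the semispray. The key structural fact I would use is that, writing $d = d_h + d_v + d_J$-type pieces (more precisely decomposing $d\theta$ according to the bidegree with respect to horizontal/vertical forms in the adapted cobasis $\{dt,\delta x^i,\delta y^i\}$), the condition $\mathcal{L}_S d\theta = 0$ can be projected onto each bidegree component, and each projection should yield exactly one of the conditions $(H_1),(H_2),(H_3),(H_4)$. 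Condition $(DS)$ is essentially a restatement of $\mathcal{L}_S(i_S d\theta)=0$, which I would extract by applying $i_S$ to $\mathcal{L}_S d\theta = 0$ and using the commutation $\mathcal{L}_S i_S = i_S \mathcal{L}_S$ (since $\mathcal{L}_S S = 0$).

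For the forward direction, I would start from $\mathcal{L}_S d\theta = 0$ and systematically apply the algebraic operators $i_J$, $i_h$, $i_\Phi$, and $i_S$, using the commutation formulae \eqref{eq:com2} and \eqref{eq:com3} together with the brackets $[S,J]$, $[S,h]=\mathbb{F}+J+\Phi$ established earlier. The idea is that $i_J \mathcal{L}_S d\theta = 0$ combined with $i_J d\theta = d_J\theta$ produces (after using $i_J\mathcal{L}_S - \mathcal{L}_S i_J = i_{\Gamma - S\otimes dt}$, as in the proof of Theorem \ref{thm:LST1}) a statement controlling $d_J\theta$ modulo $dt$, giving $(H_1)$. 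Similarly, projecting with $h$ should isolate $(H_2)$, and the Jacobi-endomorphism projection with $\Phi$ should isolate $(H_3)$; the formula $[J,\Phi]=3R+\Phi\wedge dt$ from \eqref{eq:24'} and $\Phi = i_S R$ from \eqref{eq:23'} will be needed to relate the $\Phi$-component of $d\theta$ to curvature terms. Condition $(H_4)$, involving $\nabla d\theta\wedge dt$, I would obtain from the commutation rule \eqref{eq:30}, $d\nabla - \nabla d = d_\Psi$, applied to $\theta$, since $\nabla d\theta = d\nabla\theta - d_\Psi\theta$ and $\nabla\theta$ is controlled by Lemma \ref{lemnablatheta}; wedging with $dt$ and using $(H_1),(H_2)$ should kill the obstruction.

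For the converse, I would assume $(H_1)$--$(H_4)$ and $(DS)$ and reconstruct $\mathcal{L}_S d\theta = 0$. By Lemma \ref{omegadt}, to show a form vanishes it suffices to show both that it is of the form $(-1)^{k+1}i_S(\cdot)\wedge dt$ and that its contraction with $S$ vanishes; equivalently I would decompose $\mathcal{L}_S d\theta$ into its part annihilated by $dt$ and its remaining part. The conditions $(H_1)$--$(H_4)$ together should force each adapted-bidegree component of $\mathcal{L}_S d\theta$ to lie in the ideal generated by $dt$, while $(DS)$ — which says $\mathcal{L}_S i_S d\theta = 0$ — handles precisely the component along $dt$, i.e. the "conservative" part that distinguishes this theorem from Theorem \ref{thm:LST1}. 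The reconstruction of $\mathcal{L}_S\theta$ as a closed form, rather than merely $\mathcal{L}_S\theta\wedge dt$ being closed, is where $(DS)$ does its essential work: it supplies the one extra scalar condition (the conservation law $S(i_S\theta - L)=0$ implicit in the equivalence classes of $\Lambda^1_S(J^1\pi)$) that upgrades closure-modulo-$dt$ to genuine closure.

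The main obstacle I anticipate is bookkeeping the interaction between the three horizontal/vertical bidegree pieces of $d\theta$ under $\mathcal{L}_S$, since $S$ does not preserve the splitting cleanly — the brackets \eqref{eq:LSV}--\eqref{eq:LSCV} mix $\delta x^i$, $\delta y^i$, and the curvature terms $R^i_j, R^k_{ij}$ appear. In particular, separating the genuinely new condition $(H_3)$ (the $\Phi$/curvature condition, which has no analogue in the time-independent $d_h\theta=0$ characterization of Theorem \ref{thm:LST1}) from $(H_2)$ and $(H_4)$ will require careful use of the identity \eqref{eq:24'} and the fact that $\Phi$ is semi-basic with $\Phi^2=0$. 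Verifying that the five conditions are not merely necessary but jointly sufficient — that no information is lost in the projections — is the delicate point, and I expect it to hinge on showing that the adapted components I extract genuinely reconstitute all of $\mathcal{L}_S d\theta$ via the decomposition $\operatorname{Id} = S\otimes dt + h_1 + v$ and Lemma \ref{omegadt}.
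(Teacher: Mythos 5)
Your overall architecture is partly sound, and several pieces coincide with the paper's proof: the derivation of $(H_1)$ by applying $i_J$ twice to $\mathcal{L}_Sd\theta=0$ via the commutation rule \eqref{eq:com2}, the extraction of $(DS)$ from $i_S\mathcal{L}_Sd\theta=\mathcal{L}_Si_Sd\theta$, and the structure of your converse (use $(DS)$ with Lemma \ref{omegadt} to reduce genuine closure to closure modulo $dt$, then use $(H_4)$ together with $\mathcal{L}_Sd\theta=\nabla d\theta+i_\Psi d\theta$ and $(H_1)$--$(H_3)$ to kill $\mathcal{L}_Sd\theta\wedge dt$) are exactly what the paper does.

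The genuine gap is in your forward derivation of $(H_2)$, $(H_3)$ and $(H_4)$, which rests on the claim that each condition is isolated by a single projection ($i_h$, $i_\Phi$, and the rule $d\nabla-\nabla d=d_\Psi$, respectively). This fails. Applying $i_h$ to $\mathcal{L}_Sd\theta=0$ produces, via \eqref{eq:com2}, the bracket term $i_{[S,h]}d\theta=i_{\mathbb{F}+J+\Phi}d\theta$, which entangles $(H_2)$ with the $J$- and $\Phi$-components rather than isolating it; in the paper, $(H_2)$ is instead obtained from the intermediate identity \eqref{lsijdt} of the $(H_1)$ step combined with \eqref{eq:ih0djt}. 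Worse, applying $i_\Phi$ produces $i_{[S,\Phi]}d\theta$, and $[S,\Phi]=\mathcal{L}_S\Phi$ contains $\nabla R^i_j$ terms as well as horizontal terms, none of which are controlled by your hypotheses; the identities \eqref{eq:23'} and \eqref{eq:24'} you plan to invoke concern $i_SR$ and $[J,\Phi]$, not $[S,\Phi]$, and are in fact never used in the paper's proof. The coordinate expression \eqref{eq:42} shows why no clean projection can work: the $\delta x^j\wedge\delta x^i$ component of $\mathcal{L}_Sd\theta=0$ reads $\nabla b_{ij}-g_{ik}R^k_j+g_{jk}R^k_i=0$ and the $\delta y^j\wedge\delta x^i$ component reads $\nabla g_{ij}+b_{ij}-b_{ji}=0$, so each component mixes two Helmholtz conditions, and they disentangle only by symmetric/antisymmetric splitting and by feeding $(H_2)$ (i.e. $b_{ij}=0$, hence $\nabla b_{ij}=0$) back into the other component. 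The paper's invariant mechanism for this is the part your plan is missing: from $\nabla d\theta+i_\Psi d\theta=0$ one applies $i_\Gamma$, commutes using $\nabla i_\Gamma=i_\Gamma\nabla$ and $\Psi\circ\Gamma-\Gamma\circ\Psi=-2(\mathbb{F}+J+\Phi)$, and uses $(H_2)$ to obtain the pair of relations $(\nabla d\theta+2i_{\mathbb{F}+J}d\theta)\wedge dt=0$ and $(\nabla d\theta-2i_\Phi d\theta)\wedge dt=0$; a further application of $i_h$ yields $(\nabla d\theta-4i_\Phi d\theta)\wedge dt=0$, and solving this linear system gives $(H_3)$ and $(H_4)$ simultaneously. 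Finally, your route to $(H_4)$ via $\nabla d\theta=d\nabla\theta-d_\Psi\theta$ leaves the terms $d\nabla\theta\wedge dt$ and $d_{\mathbb{F}}\theta\wedge dt$ uncontrolled, and $(H_1)$, $(H_2)$ alone cannot close this: in coordinates $(H_4)$ amounts to the independent condition $\nabla g_{ij}=0$, which does not follow from $g_{ij}=g_{ji}$ and $b_{ij}=0$.
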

\begin{proof}
Suppose that $\mathcal{L}_{S}\theta$ is a closed $1$-form. If we
apply $i_J$ to $\mathcal{L}_Sd\theta=0$ and use commutation rule
\eqref{eq:com2} we obtain
\begin{eqnarray}
\mathcal{L}_{S}i_{J}d\theta+i_{\Gamma-S\otimes dt}d\theta=0.
\label{lsijdt}
\end{eqnarray}
If we apply again $i_J$ to above formula and use commutation rule
\eqref{eq:com2} we obtain
\begin{eqnarray*}
\mathcal{L}_{S}i_{J}d_{J}\theta+i_{\Gamma-S\otimes
dt}i_{J}d\theta+i_{J}i_{\Gamma-S\otimes dt}d\theta=0.
\end{eqnarray*}
Since $\theta$ is semi-basic it follows that
$i_Jd_J\theta=i^2_Jd\theta=2!d\theta \circ J^*=0$. Using
commutation rule \eqref{eq:com4}, $J\circ (\Gamma-S\otimes dt)=J$,
and $(\Gamma-S\otimes dt)\circ J=-J$, we get $$0 =
2i_{J}i_{\Gamma-S\otimes dt}d\theta + i_{J\circ(\Gamma-S\otimes
dt)} d\theta - i_{(\Gamma-S\otimes dt)\circ J} d\theta  =
i_{J}i_{\Gamma-S\otimes dt}d\theta+i_{J}d\theta.$$ Therefore, we
have \begin{equation}
d_{J}\theta+i_{J}i_{\Gamma}d\theta-i_{J}i_{S\otimes
dt}d\theta=0.\label{eq:DJTeta1}\end{equation} Since $
i_{\Gamma}d\theta=2d_{h}\theta$ is a semi-basic 2-form, it follows
that $i_{J}i_{\Gamma}d\theta=0$. Also, $i_{J}i_{S\otimes
dt}d\theta=i_{S\otimes dt}i_{J}d\theta=i_{S\otimes
dt}d_{J}\theta$. Formula \eqref{eq:DJTeta1} becomes
\begin{eqnarray} d_{J}\theta=i_{S\otimes
dt}d_{J}\theta.\label{eq:ih0djt}\end{eqnarray} Applying Lemma
\ref{omegadt} we obtain $d_{J}\theta\wedge dt=0$, which is
condition $(H_{1})$.

If we use formulae \eqref{eq:ih0djt}, \eqref{lsijdt} and
commutation rule $\mathcal{L}_Si_{S\otimes dt}= i_{S\otimes dt}
\mathcal{L}_S$, we obtain $i_{\Gamma}d\theta=i_{S\otimes
dt}(d\theta-\mathcal{L}_Sd_J\theta)$. Therefore,
$i_{\Gamma}d\theta\wedge dt=0$ and hence $d_h\theta\wedge dt=0$,
which is condition $(H_{2})$.

From the action of the dynamical covariant derivative $\nabla$,
expressed by formula \eqref{eq:27}, on $2$-forms we obtain
\begin{equation} \mathcal{L}_{S}d\theta=\nabla
d\theta+i_{\Psi}d\theta.\label{eq:nabladteta}\end{equation}
Applying $i_{\Gamma}$ to both sides of this identity and using
$\nabla i_{\Gamma}=i_{\Gamma}\nabla$, we obtain $ \nabla
i_{\Gamma}d\theta+i_{\Gamma}i_{\Psi}d\theta = 0$, which in view of
commutation formula \eqref{eq:com4} is equivalent to $\nabla
i_{\Gamma}d\theta+i_{\Psi}i_{\Gamma}d\theta+i_{\Psi\circ\Gamma-\Gamma\circ\Psi}d\theta=0$.
From Proposition \ref{pro:The-dynamical-covariant} we obtain
$\nabla\Gamma=0$, which implies
$\Psi\circ\Gamma-\Gamma\circ\Psi=-\mathcal{L}_{S}\Gamma=-2\mathcal{L}_{S}h
 =  -2(\mathbb{F}+J+\Phi)$ and hence
\begin{eqnarray}
\nabla i_{\Gamma}d\theta +
i_{\Psi}i_{\Gamma}d\theta-2i_{\mathbb{F}+J+\Phi}d\theta & = &
0.\label{eq:nablaigamadteta}\end{eqnarray} Using formula
\eqref{eq:nablaigamadteta} and $\nabla i_{\Gamma}d\theta\wedge
dt=i_{\Psi}i_{\Gamma}d\theta\wedge dt=0$ we obtain
$(i_{\mathbb{F}+J+\Phi}d\theta)\wedge dt=0$.

From second formula \eqref{eq:nabladteta} it results that $(\nabla
d\theta+i_{\mathbb{F}+J-\Phi}d\theta)\wedge dt=0$. Therefore
\begin{equation}
\begin{cases}
(\nabla d\theta+2i_{\mathbb{F}+J}d\theta)\wedge dt & =0,\\
(\nabla d\theta-2i_{\Phi}d\theta)\wedge dt &
=0.\end{cases}\label{eq:nabla1}\end{equation} From formula
\eqref{eq:nabla1}, it follows that there exists a 1-form $\omega$
on $J^{1}\pi$ such that $\nabla
d\theta-2i_{\Phi}d\theta=\omega\wedge dt$. Applying $i_{h}$ to
this identity, we get \begin{equation} \nabla
i_{h}d\theta-2i_{h}i_{\Phi}d\theta=i_{h}\omega\wedge
dt+\omega\wedge dt.\label{eq:nablaih}\end{equation}

We also have
$i_{h}i_{\Phi}d\theta=i_{\Phi}i_{h}d\theta+i_{\Phi\circ
h-h\circ\Phi} d\theta=i_{\Phi}i_{h}d\theta+i_{\Phi}d\theta$.

Since $i_{h}d\theta\wedge dt=d\theta\wedge dt$ $(H_{2})$, it exists
a 1-form $\tilde{\omega}$ on $J^{1}\pi$ such that $i_{h}d\theta=d\theta+\tilde{\omega}\wedge dt$.
It results that \begin{equation}
i_{h}i_{\Phi}d\theta=2i_{\Phi}d\theta+i_{\Phi}\tilde{\omega}\wedge dt.\label{eq:ihiphi}\end{equation}

Formulae \eqref{eq:nablaih} and \eqref{eq:ihiphi} lead to
\begin{equation} (\nabla d\theta-4i_{\Phi}d\theta)\wedge
dt=0.\label{eq:nabla2}\end{equation}

Relations \eqref{eq:nabla1} and \eqref{eq:nabla2} imply
$i_{\Phi}d\theta\wedge dt=0$, which is condition $(H_{3})$ as well
as $\nabla d\theta\wedge dt=0$, which is condition $(H_{4})$.

Evidently, the assumption $\mathcal{L}_{S}\theta$ closed implies
$\mathcal{L}_{S}i_{S}d\theta=i_{S}\mathcal{L}_{S}d\theta=0$, and
hence condition $(DS)$ is also satisfied.

For the converse, we assume that there exists a semi-basic 1-form
$\theta$ on $J^{1}\pi$ such that conditions $(H_{1})-(H_{4})$ and
$(DS)$ are verified. We will prove that $\mathcal{L}_{S}\theta$ is
closed.

Using condition $(DS)$ it follows that $i_S(\mathcal{L}_Sd\theta
\wedge dt)=\mathcal{L}_S(i_Sd\theta)\wedge dt +
\mathcal{L}_Sd\theta = \mathcal{L}_Sd\theta.$ Therefore, it
suffices to show that $\mathcal{L}_Sd\theta \wedge dt=0$. Using
formula \eqref{eq:27} and condition $(H_{4})$ we obtain
$$ \mathcal{L}_Sd\theta \wedge dt=(\nabla d\theta
+i_{\Psi}d\theta)\wedge dt = i_{\Psi}d\theta \wedge dt.$$ We will
show that the $2$-form $i_{\Psi}d\theta \wedge dt$ vanishes by
evaluating it on pairs of strong horizontal and/or vertical vector
fields.

From conditions $(H_{1})-(H_{3})$ we obtain that the $2$-forms
$d_J\theta$, $d_h\theta$ and $d_{\Phi}\theta$ vanish on any pair
of strong horizontal vector fields $h_1X, h_1Y$. From condition
$(H_{1})$ we have $i_Jd\theta(h_1X, h_1Y)=0$ and hence
\begin{eqnarray}
d\theta(JX, h_1Y)+ d\theta(h_1X, JY)=0. \label{dtjh}
\end{eqnarray}
From condition $(H_{2})$ it follows that $(i_hd\theta -
d\theta)(h_1X, h_1Y)=0$, which implies
\begin{eqnarray}
d\theta(h_1X, h_1Y)=0. \label{dthh}
\end{eqnarray}
From condition $(H_{3})$ we obtain that $i_{\Phi}d\theta(h_1X,
h_1Y)=0$, which implies
\begin{eqnarray}
d\theta(\Phi X, h_1Y)+ d\theta(h_1X, \Phi Y)=0. \label{dtph}
\end{eqnarray}

Using formulae \eqref{dtjh}, \eqref{dthh} and \eqref{dtph} we
obtain
\begin{eqnarray*}
(i_{\Psi}d\theta)(JX, JY) & = & d\theta(JX, h_1Y)+ d\theta(h_1X,
JY)=0,
\nonumber \\
(i_{\Psi}d\theta)(h_1X, h_1Y) & = & d\theta(-\Phi X, h_1Y)+
d\theta(h_1X, -\Phi Y)=0, \label{ipdtjj} \\
(i_{\Psi}d\theta)(JX, h_1Y) & = & d\theta(h_1X, h_1Y)+ d\theta(JX,
-\Phi Y)=0, \nonumber
\end{eqnarray*}
for any arbitrary pair of vector fields $X, Y$ on $J^1\pi$. Last
three formulae imply that $i_{\Psi} d\theta \wedge dt=0$ and hence
$\mathcal{L}_Sd\theta=0$.
\end{proof}
In view of Proposition \ref{prop:LVF}, one can reformulate Theorem
\ref{thm:LST2} as follows. A semispray $S$ is a Lagrangian vector
field if and only there exists a semi-basic $1$-form $\theta$ that
satisfies the Helmholtz conditions $(H_{1})-(H_{4})$. Note that in
this case we might have $d_J\theta=0$ and hence $\theta=\theta_L$
for some Lagrangian function $L$. If $d_J\theta\neq 0$ then
$\theta$ is equivalent (modulo $dt$) with the Poincar\'e-Cartan
$1$-form of some Lagrangian function. In this case, $S$ is a
conservative Lagrangian vector field and $i_Sd\theta$ is a dual
symmetry.

\subsection{Semi-basic 1-forms and multiplier matrices}
In this section we present a proof of the Theorem \ref{thm:LST2}
using local coordinates. First, this allows to show the usefulness
of the covariant derivative studied in Section \ref{dcv}, as well
as the use of the adapted basis and cobasis \eqref{eq:11}.
Secondly, it will relate the Helmholtz-type conditions
$(H_{1})-(H_{4})$ presented in Theorem \ref{thm:LST2} with their
classic formulation for a multiplier matrix.

Consider that $\theta=\theta_{0}dt+\theta_{i}\delta x^{i}$ is a
semi-basic 1-form on $J^{1}\pi$ that is not $d_J$-closed and such
that the $1$-form $\mathcal{L}_{S}\theta$ is closed. For the
$1$-form $\theta$ we will use the following
notations.\begin{eqnarray}
a_{i}=\frac{\partial\theta_{0}}{\partial y^{i}}-\theta_{i}, &  & b_{i}=\frac{\delta\theta_{0}}{\delta x^{i}}-\nabla\theta_{i},\label{eq:40}\\
b_{ij}=\frac{\delta\theta_{i}}{\delta
x^{j}}-\frac{\delta\theta_{j}}{\delta x^{i}}, &  &
g_{ij}=\frac{\partial\theta_{i}}{\partial y^{j}}.\nonumber
\end{eqnarray} A direct calculus using adapted cobasis \eqref{eq:11} leads to \begin{eqnarray} d\theta
& = & b_{i}\delta x^{i}\wedge dt+a_{i}\delta y^{i}\wedge
dt+\frac{1}{2}b_{ij}\delta x^{j}\wedge\delta x^{i}+g_{ij}\delta
y^{j}\wedge\delta x^{i}.\label{eq:41}\end{eqnarray} We emphasize
the presence of the two terms $a_i$ and $b_i$ in formula
\eqref{eq:41} due to the fact that $i_Sd\theta \neq 0$, terms
which do not appear in previous work for the time dependent case
of the calculus of variations, see \cite{anderson92, crampin84a,
sarlet95}.

Using formula \eqref{eq:27} for the dynamical covariant
derivative, the $2$-form $\mathcal{L}_Sd\theta=\nabla d\theta
+i_{\Psi}d\theta$ can be expressed in terms of the adapted cobasis
\eqref{eq:11} as follows.
\begin{eqnarray}
\mathcal{L}_{S}d\theta & = & (\nabla b_{i}-a_{j}R_{i}^{j})\delta x^{i}\wedge dt+(b_{i}+\nabla a_{i})\delta y^{i}\wedge dt\nonumber \\
 &  & +\frac{1}{2}(\nabla b_{ij}-g_{ik}R_{j}^{k}+g_{jk}R_{i}^{k})\delta x^{j}\wedge\delta x^{i}\label{eq:42}\\
 &  & +(\nabla g_{ij}+b_{ij}-b_{ji})\delta y^{j}\wedge\delta x^{i}+\frac{1}{2}(g_{ij}-g_{ji})\delta y^{j}\wedge\delta y^{i}.\nonumber \end{eqnarray}
From above formula \eqref{eq:42} it follows that the condition
$\mathcal{L}_{S}\theta$ is closed is equivalent with the following
two sets of conditions that correspond to Helmholtz conditions
($H_1$)-($H_4$) in Theorem \ref{thm:LST2}
\begin{eqnarray}
g_{ij} & = & g_{ji},\label{eq:43}\\
g_{ik}R_{j}^{k} & = & g_{jk}R_{i}^{k},\label{eq:44}\\
\nabla g_{ij} & = & 0,\label{eq:45}\\
b_{ij} & = & 0,\label{eq:46}
\end{eqnarray}
and respectively to condition (DS) in Theorem \ref{thm:LST2}
\begin{eqnarray}
b_{i}+\nabla a_{i} & = & 0,\label{eq:47}\\
\nabla b_{i}-a_{j}R_{i}^{j} & = & 0.\label{eq:48}\end{eqnarray} If
the above two equations \eqref{eq:46} and \eqref{eq:47} hold good
we deduce
\begin{equation}
\nabla^{2}a_{i}+a_{j}R_{i}^{j}=0\label{eq:49}\end{equation} which
is equivalent with the fact that the 1-form
\[i_{S}d\theta=\nabla a_{i}\delta x^{i}-a_{i}\delta
y^{i}=-b_{i}\delta x^{i}-a_{i}\delta y^{i}\] is a dual symmetry of
the semispray $S$, since it satisfies equation
\eqref{eq:dualsymlocal}.

Next, we present the local expressions of the $2$-forms
$d_J\theta$, $d_h\theta$, $d_{\Phi}\theta$ and $\nabla d\theta$.
\begin{eqnarray*}
d_{J}\theta & = & a_{i}\delta x^{i}\wedge dt + \frac{1}{2}(g_{ij}-g_{ji})\delta x^{j}\wedge\delta x^{i},\nonumber \\
d_{h}\theta & = & b_{i}\delta x^{i}\wedge dt+\frac{1}{2}b_{ij}\delta x^{j}\wedge\delta x^{i},\nonumber \\
d_{\Phi}\theta & = & R_{i}^{j}a_{j}\delta x^{i}\wedge dt+\frac{1}{2}(g_{ik}R_{j}^{k}-g_{jk}R_{i}^{k})\delta x^{j}\wedge\delta x^{i},\label{eq:der}\\
\nabla d\theta & = & \nabla b_{i}\delta x^{i}\wedge dt+\nabla
a_{i}\delta y^{i}\wedge dt+\frac{1}{2}\nabla b_{ij}\delta
x^{j}\wedge\delta x^{i} +\nabla g_{ij}\delta y^{j}\wedge\delta
x^{i}.\nonumber \end{eqnarray*} In view of above formulae, one can
immediately see that Helmholtz-type conditions $(H_{1})-(H_{4})$
are equivalent with conditions \eqref{eq:43}-\eqref{eq:46}. First
three conditions \eqref{eq:43}-\eqref{eq:45} are usually known as
the classic Helmholtz conditions of the multiplier matrix
$g_{ij}=\partial \theta_i/\partial y^j$. Fourth classic Helmholtz
condition
$$ \frac{\partial g_{ij}}{\partial y^{k}}= \frac{\partial g_{ik}}{\partial
y^{j}}$$ is identically satisfied in view of last notation
\eqref{eq:40}.

The requirement that semi-basic $1$-form $\theta$ is not
$d_J$-closed implies that $i_Sd_J\theta\neq 0$. Therefore
$i_Ji_Sd\theta\neq 0$ and hence $i_Sd\theta \neq 0$, which assures
that a solution of equation \eqref{eq:49} is not trivial. Locally,
$d_J\theta \neq 0$ implies  $a_i$ from formula \eqref{eq:40} is
not identically zero, and hence $i_{S}d\theta=\nabla a_{i}\delta
x^{i}-a_{i}\delta y^{i}$ is a non trivial, dual symmetry of the
semispray $S$.

Theorem \ref{thm:LST1} corresponds to the time independent case
studied in Theorem 4.3, \cite{bucataru09}, where it is shown that
for $0$-homogeneous semi-basic $1$-forms only two of the Helmholtz
conditions are independent. These two conditions are $d_J\theta=0$
and $d_h\theta=0$ and appear in both Theorem \ref{thm:LST1} as
well as Theorem 4.3 from \cite{bucataru09}.

Theorem \ref{thm:LST2} corresponds to the time independent case
studied in Theorem 4.1, \cite{bucataru09}, where conditions
$(H_{1})-(H_{4})$ are equivalent with the fact that a semi-basic
$1$-form is $d_J$, $d_h$ and $d_{\Phi}$-closed and satisfies
$\nabla d\theta=0$. It is important to emphasize that in general,
for the time dependent case, Lagrangian semisprays are not
conservative. Theorem \ref{thm:LST2} refers to the class of
Lagrangian semisprays (conditions $(H_{1})-(H_{4})$ are satisfied)
that have symmetries as well (condition $(DS)$ is satisfied).

\begin{acknowledgement*}
The authors acknowledge fruitful comments from O.~Krupkov\'a
regarding some regularity aspects of the inverse problem and want
to express their thanks to W. Sarlet for many discussions that
clarified the main results of the paper.

The authors have been supported by CNCSIS grant IDEI 398 from the
Romanian Ministry of Education.
\end{acknowledgement*}


\begin{thebibliography}{10}
\bibitem{anderson92} I. Anderson, G. Thompson,
\emph{The inverse problem of the calculus of variations for
ordinary differential equations}, Memoirs of the American
Mathematical Society. \textbf{98} no. 473 (1992),  1--110.

\bibitem{antonelli01} P.L. Antonelli and I. Bucataru, \emph{Volterra-Hamilton production
models with discounting: general theory and worked examples},
Nonlinear Analysis: Real World Applications. \textbf{2} (2001),
337--356.

\bibitem{balachandran80} A.P. Balachandran, G. Marmo, B.S. Skagerstam and A. Stern,
\emph{Supersymmetric point particles and monopoles with no
strings}, Nuclear Phys. B  \textbf{164} no. 3 (1980),  427--444.

\bibitem{bucataru09} I. Bucataru and M. F. Dahl, \emph{Semi basic 1-forms and Helmholtz
conditions for the inverse problem of the calculus of variations},
Journal of Geometric Mechanics,  \textbf{1} no. 2 (2009),
159--180.

\bibitem{cantrijn96} F. Cantrijn, W. Sarlet, A. Vandecasteele and E. Mart\'inez,
\emph{Complete separability of time dependent second-order
ordinary differential equations}, Acta Appl. Math.  \textbf{42}
no. 3 (1996), 309--334.

\bibitem{carinena89} J. F. Cari\~nena and E. Mart\'inez, \emph{Symmetry theory and
Lagrangian inverse problem for time dependent second order
differential equations}, J. Phys. A: Math. Gen. \textbf{22}
(1989), 2659--2665.

\bibitem{carinena91} J. F. Cari\~nena and E. Mart\'inez, \emph{Generalized Jacobi
equation and inverse problem in classical mechanics}, in ``Group
Theoretical Methods in Physics'' (eds. V. V. Dodonov and V. I.
Manko), Proc. 18th Int. Colloquim 1990, Moskow, vol. II, Nova
Science Publishers, (1991) New York.

\bibitem{crampin81} M. Crampin, \emph{On the differential geometry of the Euler-Lagrange
equations, and the inverse problem of Lagrangian dynamics}, J.
Phys. A: Math. Gen. \textbf{14} (1981), 2567--2575.

\bibitem{crampin96} M. Crampin, E. Mart\'inez and W. Sarlet, \emph{Linear connections
for systems of second-order ordinary differential equations}, Ann.
Inst. H. Poincar\'e, Phys. Th\'eor. \textbf{65} (1996), 223--249.

\bibitem{crampin84a} M. Crampin, G. E. Prince and G. Thompson, \emph{A geometrical
version of the Helmholtz conditions in time dependent Lagrangian
dynamics}, J. Phys. A: Math. Gen. \textbf{17} (1984), 1437--1447.

\bibitem{crampin94} M. Crampin, W. Sarlet, E. Mart\'inez, G. B. Byrnes
and G. E. Prince, \emph{Towards a geometrical understanding of the
inverse problem of the calculus of variations}, Inverse problems
\textbf{10} (1994), 256--260.

\bibitem{davis29} D. R. Davis, \emph{The inverse problem of the calculus of
variations in a space of $n+1$ dimensions}, Bull. Am. Math. Soc.
\textbf{35} (1929), 371--380.

\bibitem{douglas41} J. Douglas, \emph{Solution of the inverse problem of the
calculus of variations}, Trans. Amer. Math. Soc. \textbf{50}
(1941), 71--128.

\bibitem{frolicher56} A. Fr\"olicher and A. Nijenhuis, \emph{Theory of vector-valued
differential forms}, Proc. Ned. Acad. Wetensch. Ser. A \textbf{59}
(1956), 338--359.

\bibitem{grifone00} J. Grifone and Z. Muznay, \emph{Variational principles for
second order differential equations}, World Scientific (2000).

\bibitem{henneaux82} M. Henneaux, \emph{On the inverse problem of the calculus of
variations}, J. Phys. A: Math. Gen. \textbf{15} (1982) L93--L96.

\bibitem{jerie02} M. Jerie and G. Prince, \emph{Jacobi fields and linear connections
for arbitrary second order ODE's}, J. Geom. Phys. \textbf{43}
(2002), 351--370.

\bibitem{KMS93} I. Kol\'ar, P.W. Michor and J. Slovak, \emph{Natural operations
in differential geometry}, Springer-Verlag 1993.

\bibitem{kosambi35} D. D. Kosambi, \emph{Systems of Differential Equations of
the Second Order}, Quart. J. Math. Oxford Ser. \textbf{6} (1935),
1--12.

\bibitem{krupkova97} O. Krupkov\'a, \emph{The Geometry of Ordinary Variational Equations},
Lecture Notes in Mathematics, 1678, Springer-Verlag, Berlin, 1997.

\bibitem{krupkova08} O. Krupkov\'a and G. E. Prince, \emph{Second order ordinary
differential equations in Jet Bundles and the inverse problem of
the calculus of variations}, Handbook of Global Analysis (eds. D.
Krupka and D.J. Saunders) Elsevier (2008), 837--904.

\bibitem{deleon88} M. de Le\'on and P. R. Rodrigues, \emph{Dynamical connections and non-autonomous Lagrangian systems},
Ann. Fac. Sci. Toulouse Math. (5) \textbf{9} no. 2 (1988),
171--181.

\bibitem{deleon89} M. de Le\'on and P. R. Rodrigues, \emph{Methods of Differential Geometry in Analytical Mechanics},
North-Holland Publishing Co., Amsterdam, 1989.

\bibitem{marmo89} G. Marmo, G. Morandi and C. Rubano, \emph{Symmetries in the Lagrangian
and Hamiltonian formalism: the equivariant inverse problem},
Symmetries in science, III (Vorarlberg, 1988),  243--309, Plenum,
New York, 1989.

\bibitem{massa94} E. Massa and E. Pagani, \emph{Jet bundle geometry, dynamical
connections and the inverse problem of Lagrangian mechanics}, Ann.
Inst. H. Poincar\'e Phys. Th\'eor. \textbf{61} (1994), 17--62.

\bibitem{mestdag01} T. Mestdag and W. Sarlet, \emph{The Berwald-type connection
associated to time dependent second order differential equations},
Houston J. Math. \textbf{27} (2001), 763--797.

\bibitem{miron94} R. Miron and M. Anastasiei, \emph{The Geometry of Lagrange Spaces: Theory and Applications},
Kluwer Academic Publishers Group, Dordrecht, 1994.

\bibitem{morandi90} G. Morandi, C. Ferrario, G. Lo Vecchio, G. Marmo,
C. Rubano, \emph{The inverse problem in the calculus of variations
and the geometry of the tangent bundle}, Physics Reports, vol.
\textbf{188}, no. 3,4 (1990).

\bibitem{morando95} P. Morando and S. Pasquero, \emph{The symmetry in the structure
of dynamical and adjoint symmetries of second-order differential
equations}, J. Phys. A: Math. Gen. \textbf{28} (1995), 1943--1955.

\bibitem{santilli78} R. M. Santilli, \emph{Foundations of Mechanics I, The inverse
problem in Newtonian Mechanics}, Springer-Verlag, New York (1978).

\bibitem{sarlet82} W. Sarlet, \emph{The Helmholtz conditions revisited. A new
approach to the inverse problem of Lagrangian dynamics}, J. Phys.
A: Math. Gen. \textbf{15} (1982), 1503--1517.

\bibitem{sarlet90} W. Sarlet, G. E. Prince and M. Crampin, \emph{Adjoint symmetries
for time dependent second order differential equations}, J. Phys.
A: Math. Gen. \textbf{23} (1990), 1335--1347.

\bibitem{sarlet02} W. Sarlet, G. Thompson and G. E. Prince, \emph{The inverse
problem of the calculus of variations: the use of geometrical
calculus in Douglas's analysis}, Trans. Amer. Math. Soc.
\textbf{354} (2002), 2897--2919.

\bibitem{sarlet95} W. Sarlet, A. Vandecasteele, F. Cantrijn and E. Mart\'inez,
\emph{Derivations of forms along a map: the framework for time
dependent second order equations}, Diff. Geometry and its
Applications \textbf{5} (1995), 171--203.

\bibitem{saunders89} D.J. Saunders, \emph{The geometry of jet bundles}, London Math.
Soc. Lect. Note Series Vol. 142 (Camebridge Univ. Press, Camebridge)
(1989).

\bibitem{vaisman71} I. Vaisman, \emph{Vari\'et\'es riemanniene feuillet\'ees}, Czechoslovak Mathematical Journal, \textbf{21} (1971), 46--75.

\end{thebibliography}
\end{document}